\documentclass[a4paper,10pt]{article}

\usepackage[utf8x]{inputenc}
\usepackage[english]{babel}

\usepackage{amsmath}
\usepackage{graphicx}
\usepackage{hyperref}

\usepackage{amsfonts}
\usepackage{amsthm}
\usepackage{amssymb}
\usepackage{cite} 

\newcommand{\R}{\mathbb{R}} 
\newcommand{\N}{\mathbb{N}} 
 
\newcommand{\Z}{\mathbb{Z}} 

\newcommand{\graph}{\operatorname{graph}}
\newcommand{\supp}{\operatorname{spt}}

%

\newtheorem{theorem}{Theorem}[section]
\newtheorem{lemma}[theorem]{Lemma}


\theoremstyle{definition}

\newtheorem{remark}[theorem]{Remark}

\numberwithin{equation}{section}

\title{Lower-semicontinuity for the Helfrich problem.}

\author{Sascha Eichmann\thanks{The author thanks Prof. Reiner Sch\"atzle for discussing the Helfrich energy and providing insight into geometric measure theory.}\\
Mathematisch-Naturwissenschaftliche Fakultät,\\
Eberhard Karls Universität Tübingen,\\
Auf der Morgenstelle 10,\\
D-72076 Tübingen, Germany\\
E-mail: \href{mailto:sascha.eichmann@math.uni-tuebingen.de}{sascha.eichmann@math.uni-tuebingen.de}\\
Phone: +49/7071/2976886}

\begin{document}
 \maketitle

\begin{abstract}
We minimise the Canham-Helfrich energy in the class of closed immersions with prescribed genus, surface area and enclosed volume.\\
Compactness is achieved in the class of oriented varifolds. The main result is a lower-semicontinuity estimate for the minimising sequence,
which is in general false by a counter example by Gro\ss e-Brauckmann.
The main argument involved is showing partial regularity of the limit. 
It entails comparing the Helfrich energy of the minimising sequence locally to that of a biharmonic graph. 
This idea is by Simon, but it cannot be directly applied, since the area and enclosed volume of the graph may differ.
By an idea of Schygulla we adjust these quantities by using a two parameter diffeomorphism of $\R^3$. 

\end{abstract}
\textbf{Keywords.} Closed immersions, Canham-Helfrich energy, lower-semicontinuity, variational problem, oriented varifolds\\ 
\textbf{MSC.}  26B15, 35J35, 49J45, 49Q20, 58J90

\section{Introduction}
\label{sec:1}

This article deals with minimising the Helfrich energy for closed oriented smooth connected two dimensional immersions $f:\Sigma\rightarrow\R^3$, which is defined as follows
\begin{equation}
\label{eq:1_1}
W_{H_0}(f):=\int_\Sigma (\bar{H}-H_0)^2\, d\mu_g. 
\end{equation}
Here $\bar{H}$ is the scalar mean curvature, i.e. the sum of the principal curvatures with respect to a choosen unit normal of $f$. 
$\mu_g$ is the area measure on $\Sigma$ induced by $f$ and the euclidean metric of $\R^3$.
$H_0\in\R$ is called spontaneous curvature. 
This energy was introduced by Helfrich in \cite{Helfrich} and Canham \cite{Canham} to modell the shape of blood cells.
Hence it is called the Canham-Helfrich or short Helfrich energy.
We recover the Willmore energy by setting $H_0=0$ and multiplying by $\frac{1}{4}$, i.e.
\begin{equation}
\label{eq:1_2}
 W_{Will}(f):=\frac{1}{4}\int_\Sigma |\bar{H}|^2dA = \frac{1}{4}W_{0}(f).
\end{equation}
The Willmore energy goes back to Thomsen \cite{Thomsen} and he denoted our modern Willmore surfaces as conformal minimal surfaces.
Willmore later revived the mathematical discussion in \cite{Willmore}. 
Please note, that in the Willmore setting an orientation is not needed, contrary to the Helfrich energy,
hence we need a fixated normal. We assume $\Sigma$ to have a continuous orientation $\tau$.
Then we set $\nu_f:=*(df(\tau))\in \partial B_1(0)\subset \R^3$ as the unit normal of $f$. Here $*$ denotes the Hodge-$*$-Operator. 
Furthermore we like to prescribe the area and enclosed volume of $f$. Therefore we set
\begin{equation}
 \label{eq:1_3}
 Area(f):=\int_\Sigma d\mu_g,\quad Vol(f)=\frac{1}{3}\int_\Sigma f(x)\cdot \nu_f(x)\, d\mu_g(x).
\end{equation} 
Please note, that if $f$ is an embedding and $\nu_f$ the outer normal, $Vol(f)$ would be the volume of the set enclosed by $f$.
In the general case $Vol(f)$ may become negativ dependend on the orientation.\\
Minimisers of such a problem satisfy the following Euler-Lagrange equation 
(see e.g. \cite[Eq. (31)]{OuYang_Helfrich})
\begin{equation}
 \label{eq:1_4}
 2\Delta_f \bar{H} + 4\bar{H}\left(\frac{1}{4}\bar{H}^2-K\right)-2H_0 K -H_0^2\bar{H}-\lambda_A \bar{H} + \lambda_V=0.
\end{equation}
Here $\lambda_A\in\R$ and $\lambda_V\in\R$ correspond to Lagrange multipliers for the prescribed area and enclosed volume respectively.
This differential equation is highly nonlinear since the Laplace-Beltrami $\Delta_f$ depends on the unknown immersion $f$.
Furthermore it is of fourth order, hence standard techniques like the maximum principle are not applicable.
Nevertheless a lot of important results concerning such problems have been achieved:
Existence of closed Willmore surfaces of arbitrary genus has been shown in the papers \cite{Simon} and \cite{BauerKuwert}.
If the ambient space becomes a general $3$-manifold, the problem has been examined in \cite{MondinoSchygulla}.\\
Prescribing additional conditions and showing existence has also been very successful, i.e.
the isoperimetric ratio in \cite{KellerMondinoRiviere} and \cite{Schygulla}, the area  in \cite{LammMetzger}, 
boundary conditions in \cite{Schaetzle}, \cite{Pozetta}, \cite{DaLioPalmurellaRiviere} and \cite{DeckGruRoe}.
Finally the Willmore conjecture was shown to be true in \cite{NevesMarques}.
Further research and references for Willmore problems are summarized in the surveys \cite{HellerPedit} rsp. \cite{GrunauWillmoreSurvey}.
The class of axisymmetric surfaces is especially important for modelling purposes, see e.g. \cite{DeulingHelfrich}.
Also some existence results can be more readily achieved in this class, see e.g. \cite{Doemeland}, \cite{DallDeckGru} or \cite{EichmannGrunau}
and solutions to \eqref{eq:1_4} can be analysed in greater depths, 
e.g. the behaviour of singularities in \cite{FourcadeZia}.

Furthermore the addition of the orientation complicates and/or changes the situation in the Helfrich setting. 
For example the class of invariances is notably smaller (see e.g. \cite{DeMatteisManno}) and lower semi-continuity is in general false under varifold convergence, 
see \cite[p. 550, Remark (ii)]{GrosseBrauck} for a counterexample.
Nevertheless some progress has been made:
Existence of Helfrich surfaces with prescribed surface area near the sphere has been achieved in \cite{KohsakaNagasawa} by examining the corresponding $L^2$-flow.
This result has been extendend to a general existence result for spherical Helfrich immersions with prescribed area and enclosed volume in \cite{MondinoScharrer}
by variational parametric methods. The axisymmetric case has been handled independently in \cite{ChoksiVeroni} and \cite{ChoksiMorandottiVeneroni}.
A more general approach was used by Delladio by working with Gauss-graphs in \cite{Delladio}.
There lower-semicontinuity was shown, but the limit has to be in $C^2$, which is a-priori not clear.

In general this is a hard problem, since the Helfrich energy for oriented varifolds lacks a variational characterisation (cf. \eqref{eq:2_3} and cf. \cite[p. 3]{EichmannHelfrichBoundary}).

In this paper we will show a lower-semicontinuity estimate for minimising sequences with an arbitrary but fixated topology by an ambient approach, i.e. geometric measure theory.
In the case of spherical topology this has been achieved with a parametric approach in \cite[Thm. 3.3]{MondinoScharrer}. 
The case of embeddings still remains open, since we do not have a Li-Yau type inequality. 
This prevents our arguments to be adaptable to this situation as explained in Remark \ref{5_2}.

For our minimising procedure we introduce the following set for a given two dimensional smooth connected manifold $\Sigma$ without boundary, which fixates the topology.
\begin{equation}
\label{eq:1_5}
 M_{Area_0,Vol_0}:=\left\{\begin{array}{c}f:\Sigma\rightarrow\R^3,\ 2\mbox{-dimensional oriented smooth immersion,}\\ Area(\Sigma)=Area_0,\ Vol(\Sigma)=Vol_0,\ 0\in f(\Sigma)\end{array}\right\}
\end{equation}
for a positive parameter $Area_0>0$ and a nontrivial real parameter $Vol_0\in\R\setminus\{0\}$.
Furthermore let us call
\begin{equation}
 \label{eq:1_6}
 E_{H_0,Area_0,Vol_0}:=\inf\{W_{H_0}(f),\ f\in M_{Area_0,Vol_0}\}.
\end{equation}
Now we can state our main result in case $M_{Area_0,Vol_0}\neq\emptyset$ (For unknown terminology please consult section \ref{sec:2}):
\begin{theorem}
 \label{1_1}
 Let $f_k\in  M_{Area_0,Vol_0}$ be a minimising sequence, i.e. it satisfies $W_{H_0}(f_k)\rightarrow E_{H_0,Area_0,Vol_0}$.
 Let also $V^0_k$ be the sequence of oriented integral $2$ varifolds corresponding to $f_k$ by \eqref{eq:2_3_1}.
 Then there exists an oriented integral $2$ varifold $V^0$ on $\R^3$ and a subsequence $V^0_{k_j}$, such that
 \begin{align*}
  &V_{k_j}\rightarrow V^0\mbox{ as oriented varifolds,}\\
  &Area(V^0)=Area_0,\quad Vol(V^0)=Vol_0\neq 0.
 \end{align*}
More importantly this subsequence enjoys a lower-semicontinuity estimate:
\begin{equation*}
 W_{H_0}(V^0) \leq \liminf_{j\rightarrow\infty} V^0_{k_j}.
\end{equation*}
Furthermore there exist at most finitely many bad points, such that $V^0$ is locally a union of $C^{1,\beta}\cap W^{2,2}$ graphs outside these bad points.
For a precise statement of the graphical decomposition please refer to Lemma \ref{5_1}.
\end{theorem}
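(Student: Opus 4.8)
The plan is to split the statement into its four components --- compactness, conservation of area and volume, the lower-semicontinuity estimate, and the partial regularity --- and to treat the regularity as the engine that drives the rest, exactly as the abstract suggests. First I would extract a convergent subsequence. Since $f_k$ is minimising, $W_{H_0}(f_k)$ is bounded, and because $Area(f_k)=Area_0$ is fixed the pointwise inequality $\bar H^2 \le 2(\bar H - H_0)^2 + 2H_0^2$ integrates to
\begin{equation*}
 W_0(f_k) = \int_\Sigma \bar H^2\, d\mu_g \le 2\,W_{H_0}(f_k) + 2H_0^2\,Area_0,
\end{equation*}
so the Willmore energy is uniformly bounded. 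With bounded area, bounded Willmore energy and the anchoring $0\in f_k(\Sigma)$, the monotonicity formula yields uniform density and diameter bounds, so the oriented integral $2$-varifolds $V^0_k$ lie in a compact subset of the space of oriented varifolds; passing to a subsequence gives the limit $V^0$, which is integral with generalised mean curvature in $L^2$.

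Next I would pass the constraints to the limit. The weight measures converge weakly-$*$ with no loss of mass (again by monotonicity together with the diameter bound), giving $Area(V^0)=Area_0$; the oriented structure lets the volume functional of \eqref{eq:1_3} be written against the orientation, and its continuity under oriented varifold convergence --- using that, by the diameter bound and the anchoring, the position stays in a fixed compact set --- yields $Vol(V^0)=Vol_0$. The hypothesis $Vol_0\ne 0$ is precisely what rules out collapse to a lower-dimensional limit, so that $V^0$ remains genuinely two-dimensional.

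The heart of the matter is the partial regularity recorded in Lemma \ref{5_1}, and this is where I expect the real difficulty. Following Simon, at each point where the density is close to one I would invoke the graphical decomposition: locally $V^0$ is, up to small pimples, the graph of a $W^{2,2}$ function, and one compares the Helfrich energy of the minimising sequence on such a ball with that of the biharmonic graph sharing the same boundary data. Since $f_k$ is minimising, this comparison forces the limit to coincide with a good graph, giving $C^{1,\beta}\cap W^{2,2}$ regularity away from the finitely many points of concentrated density --- the bad points --- whose finiteness follows from density quantisation and the total energy bound. The obstruction to running Simon's argument verbatim is that the comparison graph respects neither constraint: replacing a piece of $f_k$ by a biharmonic graph generally perturbs both $Area$ and $Vol$, rendering the competitor inadmissible. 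To repair this I would adopt Schygulla's device of a two-parameter family of diffeomorphisms of $\R^3$, chosen so that the induced map $(s,t)\mapsto(Area,Vol)$ is a local diffeomorphism near the origin; an inverse-function argument then restores both prescribed quantities simultaneously. The key estimate is that the energy cost of this correction is of higher order than the energy gained from the graphical comparison, so the comparison inequality survives. Making this quantitative --- controlling the effect of the diffeomorphism on $\int(\bar H-H_0)^2$ uniformly in $k$ --- is the main technical hurdle.

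Finally, the lower-semicontinuity estimate falls out of the regularity. On the regular part the graphs converge in $C^{1,\beta}\cap W^{2,2}$, which is strong enough to pass not only the quadratic term $\int \bar H^2$ (already lower-semicontinuous) but also the troublesome linear term $\int \bar H$ to the limit without the cancellation responsible for the Gro\ss e-Brauckmann counterexample in \cite{GrosseBrauck}; since the finitely many bad points carry no weight, summing the local estimates gives $W_{H_0}(V^0)\le \liminf_{j\to\infty} W_{H_0}(V^0_{k_j})$.
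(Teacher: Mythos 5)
Your overall architecture coincides with the paper's: compactness for oriented varifolds, passage of the constraints to the limit via the diameter bound and compact support, Simon's graphical decomposition with biharmonic replacement, a Schygulla-type two-parameter diffeomorphism to restore area and volume, and lower-semicontinuity as a consequence of the partial regularity. However, there is a genuine gap at the crux of the regularity argument: you assume that the vector fields $X,Y$ can be \emph{chosen} so that $(s,t)\mapsto(\operatorname{Area},\operatorname{Vol})$ has invertible differential at the origin. This is not always possible. The differential is the $2\times 2$ matrix with entries $\delta\mu(X)=-\int\langle H_\mu,X\rangle\,d\mu$, $\delta\mu(Y)$ in the first row and the volume variations $\int\langle X,\sigma_R\rangle\,d|\triangledown\theta_R|$, $\int\langle Y,\sigma_R\rangle\,d|\triangledown\theta_R|$ in the second. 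Whenever $H_\mu$ is $\mu$-a.e.\ a bounded multiple of the oriented normal weighted by the densities --- the round sphere is the simplest example --- the two rows are proportional for \emph{every} choice of $X$ and $Y$, the determinant vanishes identically, and no inverse-function argument can start. The paper resolves this by a dichotomy (Lemma \ref{4_2}): either fields with nonvanishing determinant exist, supported in small balls away from the point under consideration, or the degeneracy itself forces $H_\mu(\theta_++\theta_-)=C\,(*\xi)(\theta_+-\theta_-)$ $\mu$-a.e., hence $H_\mu\in L^\infty(\mu)$; in that second branch regularity follows directly from Allard's theorem, using Sch\"atzle's $C^2$-rectifiability and locality results to transfer the bound on $H_\mu$ to each sheet $\mu_i$, and no area/volume correction is needed at all. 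Without this second branch your regularity proof fails on precisely the most symmetric candidates for the minimiser.

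Two related points. First, you misplace the role of the hypothesis $Vol_0\neq 0$: it does not rule out collapse (preservation of area already does that); it is what seeds the construction of the correcting fields, since $Vol(V^0)=-\int\theta_R\,d\mathcal{L}^3\neq 0$ forces $|\triangledown\theta_R|\neq 0$ and hence yields a field $Y$ with nonzero volume variation (Lemma \ref{4_1}), the first step toward the nondegenerate pair in Lemma \ref{4_2}. Second, two quantitative ingredients you gloss over: the fields must be supported away from the replacement ball, which is what keeps the diffeomorphism from disturbing the graphical comparison and makes the costs in energy, area and second fundamental form $O(\rho)$ \emph{uniformly in $k$} (Lemmas \ref{4_4}, \ref{4_5} and Remark \ref{4_6}); and the inverse function theorem must be applied in a quantitative form (Theorem \ref{A_2}) so that the radius of invertibility decays at most linearly in $\rho$, beating the quadratic area/volume discrepancy of the replacement --- your phrase ``higher order'' is morally this comparison, but it needs the uniform-in-$k$ bound on $D^2F_k$ to be justified. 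For the compactness step, note also that Hutchinson's theorem requires a bound on the boundary masses of the associated currents (here $\partial[|V^0_k|]=0$ since the surfaces are closed), and that integrality of the limit together with the $L^2$ weak second fundamental form --- needed even to \emph{define} the bad points via the concentration measure $\nu$ --- uses the fixed topology through Gauss--Bonnet.
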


The proof works in two major steps. The first is to show some preliminary regularity of $V^0$ (see Lemma \ref{5_1}).
After this the lower-semicontinuity estimate follows precisely as in \cite[Lemma 4.1]{EichmannHelfrichBoundary}, hence we will not include this step here.

The initial regularity is shown by comparing the Helfrich energy of the minimising sequence to a biharmonic replacement. This idea was first used by Simon in \cite{Simon}.
Since the biharmonic replacement does not have the same area and/or enclosed volume, we will correct these parameters by an idea of Schygulla \cite{Schygulla}.
We generalise Schygulla's techniques to the case of immersions and to two prescribed quantities. 
This idea essentially is adjusting these quantities of the biharmonic replacement outside of the replacement region by a two parameter diffeomorphism.

The paper is build up as follows: Section \ref{sec:2} is concerned with compactness in the class of oriented varifolds.
Here the Helfrich energy, the area and the enclosed volume are formulated for these varifolds.
Next in section \ref{sec:3} we examine our definition of enclosed volume for oriented varifolds in greater detail and for example calculate an Euler-Lagrange equation.
Afterwards we construct the aforementioned two parameter diffeomorphism and analyse these in \ref{sec:4}.
In section \ref{sec:5} we finally show the initial $C^{1,\beta}$-regularity, from which the lower-semicontinuity estimate follows.
In Appendix \ref{sec:A} we collect some usefull results for our reasoning.

\section{Compactness}
\label{sec:2}
In this chapter we recall the necessary results and objects to obtain compactness for a minimising sequence in measure theoretic terms:

Since the Helfrich energy is dependend on the choosen orientation, we will have to work with oriented varifolds in our variational framework.
Oriented varifolds were introduced by Hutchinson in \cite[§ 3]{Hutchinson}. 
We recall the necessary definitions here (see also \cite[Appendix B]{EichmannHelfrichBoundary}). First
\begin{equation*}
G^0(2,3)=\{\tau^1\wedge\tau^2\in\Lambda_2\R^{3}:\ \tau^1,\tau^2\in\R^3,\ |\tau^1|=|\tau^2|=1,\ \tau^1\perp\tau^2\}
\end{equation*}
is called the oriented Grassmannian manifold of $2$-dimensional oriented linear subspaces in $\R^3$. 
Since we need to connect orientations with normals, we also need the Hodge star operator $*:G^0(2,3)\rightarrow \partial B_1(0)$.
In our setting $*$ becomes just the cross product, i.e. $*(\tau^1\wedge\tau^2)=\tau^1\times \tau^2$.
An oriented integral $2$-varifold on an open set $\Omega\subset\R^3$ is given by a countable $2$-rectifiable set $M\subset O$, 
$\mathcal{H}^2$-measurable densities $\theta_+,\theta_-:M\rightarrow \N_0$ and an orientation
$\xi:M\rightarrow G^0(2,3)$, such that $*(\xi(x))\perp T_x M$ for $\mathcal{H}^2$ a.e. $x\in M$. 
Then the corresponding oriented varifold is a Radon measure on $\Omega\times G^0(2,3)$ given for $\Phi\in C^0_0(\Omega\times G^0(2,3))$ by
\begin{equation}
 \label{eq:2_1}
 V^0[M,\theta_+,\theta_-,\xi](\Phi):=\int_M \Phi(x,\xi(x))\theta_+(x) + \Phi(x,-\xi(x))\theta_-\, d\mathcal{H}^2(x).
\end{equation}
Furthermore we need to define the Helfrich energy for such an oriented integral varifold. 
Hence we need to make sense of a mean curvature. 
For this let $\pi^0: \R^3\times G^0(2,3)$ be given by $\pi^0(x,\xi)=x$. Then the mass of $V^0$ is defined as
\begin{equation*}
 \mu_{V^0}:=\pi^0(V^0) = (\theta_++\theta_-)\mathcal{H}^2\lfloor M,
\end{equation*}
which is also an integral varifold in the sense of \cite[§ 15]{Simon_Buch}.
The first variation of $V^0$ is defined as the first variation of $\mu_{V^0}$, i.e. $\delta V^0:=\delta \mu_{V^0}$ 
Thus we define the mean curvature vector of $V^0$ to be the mean curvature vector of $\mu_{V^0}$ (cf. \cite[§16]{Simon_Buch}).
If we say $V^0$ has a mean curvature vector $H_{V^0}\in L^2(\mu_{V^0})$, then it is square integrable w.r.t. $\mu_{V^0}$ and for every $X\in C^1_0(\Omega,\R^3)$ we have
\begin{equation*}
 \delta V^0(X)= -\int_\Omega H_{V^0}\cdot X\, d\mu_{V^0}.
\end{equation*}
In the sense of \cite[§ 39]{Simon_Buch} this means that $\mu_{V^0}$ does not have a generalized boundary.

Furthermore we can define an integral $2$ current associated to $V^0$ by
\begin{equation*}
 [|V^0|](\omega):=\int_{G^0(\Omega)}\langle \omega(x), \xi \rangle\, dV^0(x,\xi),\ \omega\in C_0^\infty(\Omega,\Lambda^2\R^{3}).
\end{equation*}
We choose the same notations for currents as \cite[Chapter 6]{Simon_Buch}.

For an integral oriented varifolds $V^0=V^0[M,\theta_+,\theta_-,\xi]$ the current also satisfies
\begin{equation*}
 [|V^0|](\omega) = \int_M \langle \omega(x),\xi(x)\rangle (\theta_+(x)-\theta_-(x))\, d\mathcal{H}^2
\end{equation*}
and is therefore integral as well.
We will call $\partial[|V^0|]$ the boundary in the sense of currents of $V^0$.
Here $\partial$ is the boundary operator for currents (see e.g. \cite[Eq. (26.3)]{Simon_Buch}). 

Convergence of oriented varifolds is defined as weak convergence of the corresponding Radon measures. 
I.e. we say a sequence of oriented integral varifolds $V^0_k$ on $\Omega\subset \R^3$ converges weakly to an oriented integral varifold $V^0$,
iff for all $\Phi\in C^0_0(\Omega\times G^0(2,3))$ we have $V^0_k(\Phi)\rightarrow V^0(\Phi)$.
\cite[Theorem 3.1]{Hutchinson} now gives us 
that the following set is sequentially compact with respect to oriented varifold convergence:
\begin{align}
\label{eq:2_2}
\begin{split}
 \{V^0\mbox{ oriented integral varifold}:\ \forall \Omega'\subset\subset\Omega\ \exists C(\Omega')<\infty:&\\ \mu_{V^0}(\Omega')+\|\delta\mu_{V^0}\|(\Omega') + M_{\Omega'}(\partial[|V^0|])\leq C(\Omega')\}
\end{split}
 \end{align} 
 Here $M_{\Omega'}(\cdot)$ denotes the mass in the sense of currents.

 The Helfrich energy of $V^0$ is (see also \cite[Eq. (2.1)]{EichmannHelfrichBoundary})
\begin{align}
 \label{eq:2_3}
 \begin{split}
 W_{H_0}(V^0)=&\int_M |H_{V^0}(x)-H_0(*\xi(x))|^2\theta_+(x)\\& + |H_{V^0}(x)+H_0(*\xi(x))|^2\theta_-(x)\, d\mathcal{H}^2(x)\\
 =& \int |H_{V^0}(x)-H_0(*\xi)|^2\, dV^0(x,\xi).
\end{split}
 \end{align}

Furthermore we need to define the enclosed volume and the area of such oriented varifolds:
\begin{equation*}
 Area(V^0):=\mu_{V^0}(\R^3),\quad Vol(V^0):=\frac{1}{3}\int \langle x, (*\xi)\rangle\, dV^0(x,\xi). 
\end{equation*}

Now let $\Sigma$ be a smooth oriented $2$-dimensional manifold and $f:\Sigma\rightarrow\R^3$ a smooth immersion.
To employ the compactness criterion \eqref{eq:2_2} we need to define a corresponding oriented integral varifold (see \cite[§2]{EichmannHelfrichBoundary}):
Let $\xi_f:f(\Sigma)\rightarrow G^0(2,3)$ be an $\mathcal{H}^2$ measurable orientation. Then the corresponding oriented integral varifold is
\begin{equation}
\label{eq:2_3_1}
 V^0_f:=V^0(f(\Sigma),\theta_+,\theta_-,\xi_f).
\end{equation}
To define the densities let us denote the choosen continuous orientation of $T_x\Sigma$ by $\tau(x)$. 
Then $\theta_+,\theta_-:f(\Sigma)\rightarrow \N_0$ are defined by
\begin{align}
\begin{split}
\label{eq:2_4}
 \theta_+(y)&=\sum_{x\in f^{-1}(y)}\operatorname{sign}_+(df(\tau(x))\diagup (\xi_f(y)),\\ \theta_-(y)&=\sum_{x\in f^{-1}(y)}\operatorname{sign}_-(df(\tau(x))\diagup (\xi_f(y)).
\end{split}
 \end{align}
Here 
\begin{equation*}
 \operatorname{sign}_+(df(\tau(x))\diagup (\xi_f(y))=\left\{\begin{array}{cc}1,&\mbox{ if }\xi_f(y)\mbox{ is the same orientation as }df(\tau(x))\\ 0,&\mbox{ else.}\end{array}\right. 
\end{equation*}
Analogously $\operatorname{sign}_-(df(\tau(x))\diagup (\xi_f(y))=1$, if $\xi_f(y)$ is the opposite orientation of $df(\tau(x))$. 
Please note, that these densities are only well defined $\mathcal{H}^2\lfloor f(\Sigma)$ almost everywhere, which is enough to obtain a well defined oriented varifold.

Let $f_k\in  M_{Area_0,Vol_0}$ be a minimising sequence as in Theorem \ref{1_1} with orientation $\xi_k:f_k(\Sigma)\rightarrow G^0(2,3)$ and no boundary, i.e. closed.
Furthermore let
\begin{equation}
 \label{eq:2_5}
 V^0_k:=V^0(f_k(\Sigma),\theta^k_+,\theta^k_-,\xi_k),
\end{equation}
with $\theta_\pm^k$ defined as in \eqref{eq:2_4}.
Let $H_k:f_k(\Sigma)\rightarrow\R^3$ be the mean curvature vector of $V^0_k$. Then the Helfrich energy of $f_k$ becomes
\begin{equation*}
 W_{H_0}(f_k)=\int_{f_k(\Sigma)} |H_k(x)-H_0(* \xi_k(x))|^2\theta_+^k + |H_k(x)+H_0(* \xi_k(x))|^2\theta_-^k \,d\mathcal{H}^2
\end{equation*}

By the Cauchy-Schwartz's and Young's inequality we get for $\varepsilon>0$ arbitrary
\begin{align*}
 &\int_{f_k(\Sigma)} |H_k(x)-H_0(* \xi_k(x))|^2\theta_+^k\,d\mathcal{H}^2\\
 =& \int_{f_k(\Sigma)} \left(|H_k|^2 - 2H_0 \langle H_k,(*\xi_k)\rangle + H_0^2\right))\theta_+^k\,d\mathcal{H}^2\\
 \geq& \int_{f_k(\Sigma)} \left((|H_k|^2 - 2|H_0||H_k| + H_0^2\right)\theta_+^k\,d\mathcal{H}^2\\
 \geq& \int_{f_k(\Sigma)} \left(|H_k|^2 - \varepsilon|H_k|^2 - \frac{1}{\varepsilon}H_0^2 + H_0^2\right)\theta_+^k\,d\mathcal{H}^2\\
 =& (1-\varepsilon)\int_{f_k(\Sigma)} |H_k|^2\theta_+^k\,d\mathcal{H}^2 + \left(1-\frac{1}{\varepsilon}\right)H_0^2\int_{f_k(\Sigma)}\theta_+^k\, d\mathcal{H}^2.
\end{align*}
By repeating the argument with $\theta_-^k$, we obtain by choosing $\varepsilon\in(0,1)$
\begin{equation}
 \label{eq:2_6}
\frac{W_{H_0}(f_k) + (\frac{1}{\varepsilon}-1)H_0^2Area_0}{1-\varepsilon}\geq \int_{\Sigma_k}|H_k|^2\,(\theta_+^k+\theta_-^k)d\mathcal{H}^2 = \int |H_k|^2\, d\mu_k.
\end{equation}
and hence a $C=C(H_0)>0$, such that
\begin{equation}
 \label{eq:2_6_1}
 \int |H_k|^2\, d\mu_k\leq C \left( W_{H_0}(f_k) + Area(f_k)\right).
\end{equation}


By the definition of $M_{Area_0,Vol_0}$ we also have
\begin{equation}
\label{eq:2_7_1}
 0\in f_k(\Sigma).
\end{equation}
for every $k\in\N$.
Since the $f_k$ are closed we have $\partial [|V^0_k|]=0$. 
Hence equation \eqref{eq:2_5} allows us to employ Hutchinsons compactness result for oriented integral varifolds \cite[Thm. 3.1]{Hutchinson} rsp. \eqref{eq:2_2} and obtain
an oriented integral $2$-varifold $V^0=V^0[M,\theta_+,\theta_-,\xi]$, such that after extracting a subsequence and relabeling we have
\begin{equation*}
 V^0_k\rightarrow V^0\mbox{ as oriented varifolds.}
\end{equation*}
Furthermore let $A_k$ be the second fundamental form of $f_k$. 
By \eqref{eq:2_5} and by only dealing with fixated topology for $f_k$ we get the following estimate
\begin{equation*}
 \int_{f_k(\Sigma)}|A_k|^2\,d\mathcal{H}^2\leq C,
\end{equation*}
for some constant $C>0$ independent of $k$ (see also \cite[Eq. (1.1)]{Schaetzle}).
Hence by \cite[Thm. 5.3.2]{Hutchinson} $\mu:=\mu_{V^0}$ has a weak second fundamental form $A_\mu\in L^2(\mu)$. 
By possibly extracting another subsequence we obtain 
\begin{align}
 \label{eq:2_8}
 \begin{split}
   V^0_k\rightarrow V^0 &\mbox{ as oriented varifolds,}\\
   \mu_k\rightarrow \mu &\mbox{ weakly as varifolds,}\\
   |A_k|^2\mu\rightarrow \nu & \mbox{ weakly as Radon measures,}\\
   |A_\mu|^2\mu \leq \nu &\mbox{ and } \nu(\R^3)\leq C<\infty.
 \end{split}
\end{align}
Without loss of generality we also have $M=\operatorname{spt}(\mu)$.\\
Before we can proceed we need to ensure that the limit $V^0$ satisfies $Area(V^0)=A_0$ and $Vol(V^0)=Vol_0$.
For the first one, we use the following lemma
\begin{lemma}[see \cite{EichmannHelfrichBoundary}, Lemma 2.1]
 \label{2_1}
 $\operatorname{spt}(\mu)$ is compact.
\end{lemma}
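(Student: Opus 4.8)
The plan is to show that $\operatorname{spt}(\mu)$ is bounded; since the support of a Radon measure is by definition closed, boundedness together with closedness immediately yields compactness. The whole argument rests on a uniform diameter bound for the approximating immersions, which then transfers to the weak limit.

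First I would record a uniform bound on the $L^2$-norm of the mean curvature along the sequence. As $f_k$ is a minimising sequence, $W_{H_0}(f_k)\to E_{H_0,Area_0,Vol_0}$ is in particular bounded, and $Area(f_k)=Area_0$; hence by \eqref{eq:2_6_1} there is a constant $C_0<\infty$, independent of $k$, with $\int |H_k|^2\,d\mu_k\le C_0$. The central ingredient is then Simon's diameter estimate \cite{Simon}. Since $\Sigma$ is connected and closed (without boundary) and $f_k$ is a smooth immersion, each image $f_k(\Sigma)$ is a compact connected subset of $\R^3$, so Simon's estimate provides a universal constant $C$ with
\[
\operatorname{diam}\big(f_k(\Sigma)\big)\le C\,\big(\mu_k(\R^3)\big)^{1/2}\Big(\int |H_k|^2\,d\mu_k\Big)^{1/2}\le C\sqrt{Area_0}\,\sqrt{C_0}=:D,
\]
uniformly in $k$. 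Combined with $0\in f_k(\Sigma)$ from \eqref{eq:2_7_1}, this confines every image to one fixed ball, so that $\operatorname{spt}(\mu_k)\subseteq f_k(\Sigma)\subseteq \overline{B_D(0)}$ for all $k$.

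Finally I would pass to the limit using the weak convergence $\mu_k\to\mu$ from \eqref{eq:2_8}. For any test function $\phi\in C^0_c(\R^3\setminus\overline{B_D(0)})$ we have $\int\phi\,d\mu_k=0$ for every $k$, whence $\int\phi\,d\mu=0$. Since $\phi$ ranges over all such functions, this forces $\mu(\R^3\setminus\overline{B_D(0)})=0$, i.e. $\operatorname{spt}(\mu)\subseteq\overline{B_D(0)}$. As a closed subset of a compact ball, $\operatorname{spt}(\mu)$ is therefore compact.

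I expect the main obstacle to be verifying that Simon's diameter bound genuinely applies in this setting: one must use that $\Sigma$ is connected (so that each $f_k(\Sigma)$ is connected) and closed, and one must check that the constant is truly independent of $k$ so that the confining radius $D$ is uniform. Everything downstream — the uniform energy bound from \eqref{eq:2_6_1} and the transfer of the support containment to the weak limit — is routine once that estimate is secured.
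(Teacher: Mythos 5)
Your proof is correct and follows essentially the same route as the paper: a uniform bound on $\int|H_k|^2\,d\mu_k$ from the minimising property, Simon's diameter estimate to confine the images $f_k(\Sigma)$ uniformly, and weak convergence $\mu_k\to\mu$ to transfer boundedness to $\operatorname{spt}(\mu)$. The only cosmetic difference is that you anchor at $0\in f_k(\Sigma)$ and test with functions supported outside a fixed ball, whereas the paper approximates points of $\operatorname{spt}(\mu)$ by points of $\operatorname{spt}(\mu_k)$ via lower semicontinuity to bound $\operatorname{diam}(\operatorname{spt}(\mu))$ directly; both are routine consequences of weak convergence.
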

\begin{proof}
The proof is analouge to \cite[Lemma 2.1]{EichmannHelfrichBoundary}.
For the reader's convenience we include it here.
  Simon's diameter estimate \cite[Lemma 1.1]{Simon} and the bound on the Willmore energy yield
 \begin{equation}
 \label{eq:2_9}
  \operatorname{diam}(f_k(\Sigma))\leq C\sqrt{\mu_{k}(\R^3)\cdot W_{Will}(f_k)}\leq C.
 \end{equation}
Now let $x\in \operatorname{spt}(\mu)$. For an arbitrary $\rho>0$ we obtain by e.g. \cite[Prop. 4.26]{Maggi} and the defintion of the support of a Radon measure
\begin{equation*}
 0<(\mu)(B_\rho(x))\leq \liminf_{k\rightarrow\infty}(\mu_k)(B_\rho(x)).
\end{equation*}
Hence $spt(\mu_k)\cap B_\rho(x)\neq\emptyset$ for $k$ big enough. Therefore we can find $x_k\in spt(\mu_k)$ such that $x_k\rightarrow x$. By \eqref{eq:2_9} we finally obtain 
\begin{equation*}
 \operatorname{diam}(\operatorname{spt}(\mu))\leq C
\end{equation*}
and the lemma is proven.
\end{proof}
\eqref{eq:2_9} and \eqref{eq:2_7_1} yield a constant $N>0$, such that for all $k$ we have 
\begin{equation}
 \label{eq:2_10}
 \operatorname{spt}(\mu),f_k(\Sigma)\subset B_N(0).
\end{equation}
Now choose a smooth cut-off function $\varphi\in C_0^\infty(B_{2N}(0))$, such that $\varphi=1$ on $B_N(0)$.
The varifold convergence of $\mu_k$ now yields 
\begin{equation*}
Area_0=\lim_{k\rightarrow\infty}\mu_k(\R^3)= \lim_{k\rightarrow \infty}\int \varphi\, d\mu_k=\int\varphi\, d\mu=\mu(\R^3).
\end{equation*}
Also $(x,\xi)\mapsto \varphi(x)\langle x,*(\xi)\rangle$ defines a continuous function with compact support on $\R^3\times G^0(2,3)$. 
Hence the oriented varifold convergence yields
\begin{align*}
 &Vol_0=Vol(f_k) = \frac{1}{3}\int \varphi(x)\langle x,*(\xi)\rangle \, dV^0_k(x,\xi)\\ \rightarrow\ & \frac{1}{3}\int \varphi(x)\langle x,*(\xi)\rangle\, dV^0(x,\xi) = Vol(V^0).
\end{align*} 
The enclosed volume will need more attention, since we have to calculate a first variation.
We will do this in section \ref{sec:3}

\section{First Variation of enclosed volume}
\label{sec:3}
In this section we like to derive a suitable formula for the first variation of the enclosed volume with respect to a smooth vectorfield.
Let $V^0=V^0[M,\theta_+,\theta_-,\xi]$ be an oriented $2$-integral varifold on $\R^3$ with compact support, $\mu_{V^0}(\R^3)<\infty$ and $\partial[|V^0|]=0$.
Then the isoperimetric inequality for currents \cite[Thm 30.1]{Simon_Buch} yields an integral $3$-current $R\in D_3(\R^3)$ with
\begin{equation}
 \label{eq:3_1}
 \partial R = [|V^0|],
\end{equation}
$(M(R))^\frac{2}{3}\leq C M([|V^0|]) \leq C \mu_{V^0}(\R^3) < \infty$ for some constant $C>0$ independent of $R$ or $[|V^0|]$ and such that $R$ has compact support.
Since $R$ is three dimensional in $\R^3$, \cite[Remark 26.28]{Simon_Buch} yields a function $\theta_R:\R^3\rightarrow \Z$ of bounded variation, such that for every
$\omega\in D^2(\R^3)$ we have
\begin{equation}
 \label{eq:3_2}
 R(\omega) = \int \langle \omega, e_1\wedge e_2\wedge e_3\rangle \theta_R\, d\mathcal{L}^3.
\end{equation}
Now we claim the following equation, which we will prove afterwards
\begin{equation}
 \label{eq:3_3}
 Vol(V^0) = -\int \theta_R\, d\mathcal{L}^3.
\end{equation}
As a short remark:
If $V^0$ would be given by an embedded closed surface, $R$ would represent the open and bounded set with boundary $V^0$.\\
Since $\theta_R$ is of bounded variation, we find a Borel measure $|\triangledown \theta_R|$ and a Borel measurable function $\sigma_R:\R^3\rightarrow \partial B_1(0)$, 
such that for every smooth $g:\R^3\rightarrow\R^3$ with compact support we have (see e.g. \cite[Section 5.1]{EvansGariepy})
\begin{equation*}
 \int \operatorname{div} g \theta_R\, d\mathcal{L}^3 = -\int \langle g,\sigma_R\rangle\, d|\triangledown \theta_R|. 
\end{equation*}
We now claim
\begin{equation}
 \label{eq:3_4}
 \int \langle g,\sigma_R\rangle\, d|\triangledown \theta_R| = \int \langle g(x),*(\xi)\rangle\, dV^0(x,\xi)
\end{equation}
for every $g\in C^\infty_c(\R^3,\R^3)$.
The proof is as follows: 
We define $\omega:=-g_1dx_2\wedge dx_3 + g_2dx_1\wedge dx_3 - g_3 dx_1\wedge dx_2$ and then \cite[Remark 26.28]{Simon_Buch} yields:
\begin{align*}
 \int \langle g,\sigma_R\rangle\, d|\triangledown \theta_R| =& - \int \theta_R\operatorname{div}(g) \, d\mathcal{L}^3\\
 =& -R(d\omega) = -(\partial R)(\omega) = -[|V^0|](\omega)\\
 =& -\int \langle \omega(x),\xi\rangle\, dV^0(x,\xi). 
\end{align*}
Furthermore we have 
\begin{align*}
 &\xi = \xi_1 e_2\wedge e_3 + \xi_2 e_1\wedge e_3 + \xi_3 e_1\wedge e_2\\
 \Rightarrow & *\xi = \xi_1 e_1 - \xi e_2 + \xi e_3.
\end{align*}
Hence
\begin{align*}
 \langle \omega,\xi\rangle =& -g_1\xi_1 + g_2\xi_2 - g_3\xi_3\\
 \Rightarrow\langle g,*(\xi)\rangle =& g_1\xi_1 - g_2\xi_2 + g_3\xi_3 = -\langle \omega,\xi\rangle.
\end{align*}
which yields \eqref{eq:3_4}.

Since $R$ has compact support and by multiplying with a smooth cutoff function, \eqref{eq:3_4} is valid for every smooth vectorfield.
Hence we can apply \eqref{eq:3_4} to $g(x)=\frac{1}{3}x$ and obtain
\begin{align}
\label{eq:3_4_1}
\begin{split}
 \int1\cdot\theta_R\, d\mathcal{L}^3 =& \frac{1}{3}\int\operatorname{div}(x)\theta_R(x)\, d\mathcal{L}^3(x) = -\frac{1}{3}\int \langle x,\sigma_R\rangle\, d|\triangledown \theta_R|\\
 =& -\frac{1}{3}\int \langle x,*\xi\rangle\, dV^0(x,\xi) = -Vol(V^0),
 \end{split}
\end{align}
which is \eqref{eq:3_2}.
Under our assumptions, i.e. finite mass and compact support, the current $R$ is unique.
Let us prove this claim:
Assume we have $R_1,R_2\in D_3(\R^3)$ with compact support and finite mass satisfying
\begin{equation*}
 \partial R_1 =[|V^0|],\quad \partial R_2 = [|V^0|].
\end{equation*}
Then
\begin{equation*}
 \partial (R_1 - R_2) = 0 
\end{equation*}
and 
\begin{equation*}
 M(R_1-R_2)\leq M(R_1) + M(R_2) < \infty.
\end{equation*}
Now the constancy theorem for currents (see e.g. \cite[Thm. 26.27]{Simon_Buch}) yields a $c\in \R$, such that
\begin{equation*}
 R_1 - R_2 = c[|\R^3|].
\end{equation*}
Since $R_1-R_2$ has finite mass, $c=0$, i.e.
\begin{equation}
 \label{eq:3_5}
 R_1 = R_2
\end{equation}

If we want to calculate the first derivative of the enclosed volume, we need to make sense of mapping an oriented integral varifold by a diffeomorphism.
So let $g:\R^3\rightarrow\R^3$ be a diffeomorphism.
Then we define $g_\sharp V^0:=V^0[\tilde{M},\tilde{\theta}_+,\tilde{\theta}_-,\tilde{\xi}]$ by
\begin{align}
\label{eq:3_6}
\begin{split}
 \tilde{M}:=&g(M)\\
 \tilde{\theta}_\pm(x) :=& \theta_\pm(g^{-1}(x))\\
 \tilde{\xi}(x):=& \left(\frac{d^Mg_{y\sharp}\xi(y)}{\left|d^Mg_{y\sharp}\xi(y)\right|}\right)\bigg|_{y=g^{-1}(x)}.
\end{split}
 \end{align}
Then we have for every $\omega\in D^2(\R^3)$
\begin{align*}
 &[|g_\sharp V^0|](\omega)\\
 =&\int_{g(M)} \left\langle \omega(x), \left(\frac{d^Mg_{y\sharp}\xi(y)}{\left|d^Mg_{y\sharp}\xi(y)\right|}\right)\bigg|_{y=g^{-1}(x)}\right\rangle(\theta_+(g^{-1}(x)) - \theta_-(g^{-1}(x)))\, d\mathcal{H}^2
\end{align*}
By \cite[Remark 27.2]{Simon_Buch} we also have
\begin{align*}
 &(g_\sharp[|V^0|])(\omega)\\
 =&\int_{g(M)} \left\langle \omega(x), \left(\frac{d^Mg_{y\sharp}\xi(y)}{\left|d^Mg_{y\sharp}\xi(y)\right|}\right)\bigg|_{y=g^{-1}(x)}\right\rangle(\theta_+(g^{-1}(x)) - \theta_-(g^{-1}(x)))\, d\mathcal{H}^2
\end{align*}
and hence
\begin{equation}
 \label{eq:3_7}
 g_\sharp[|V^0|] = [|g_\sharp V^0|].
\end{equation}

Now we are ready to calculate the first variation of the enclosed volume.
Let $X\in C^\infty_c(\R^3,\R^3)$ be a vectorfield and $\Phi:\R\times \R^3\rightarrow \R^3$ the corresponding flow, i.e.
$\partial_t (\Phi(t,x))=X(\Phi(t,x))$ and $\Phi(0,x)=x$ for all $t,x$.
Since $x\mapsto\Phi(t,x)$ is a diffeomorphism for every $t$, $\Phi(t,\cdot)_\sharp [|V^0|]$ is well defined. 
\eqref{eq:3_7} now yields with \cite[Remark 26.21]{Simon_Buch}
\begin{equation}
\label{eq:3_8}
 \partial \left( [|\Phi(t,\cdot)_\sharp V^0|]\right)=\partial \left(\Phi(t,\cdot)_\sharp [|V^0|]\right) = \Phi(t,\cdot)_\sharp \left(\partial[|V^0|]\right) = \Phi(t,\cdot)_\sharp R. 
\end{equation}
Hence the uniqueness property of $R$ and \eqref{eq:3_3} give us the first equality
\begin{equation}
 \label{eq:3_9}
 Vol(\Phi(t,\cdot)_\sharp V^0) = -\int \theta_{\Phi(t,\cdot)_\sharp R}\, d\mathcal{L}^3 = -\int \theta_R(\Phi(t,\cdot)^{-1}(x))\, d\mathcal{L}^3(x).
\end{equation}
The second equality follows from \cite[Remark 27.2]{Simon_Buch} and the fact, that $x\mapsto\Phi(t,x)$ is orientation preserving for every $t\in\R$.
If we now decompose $\theta_R$ into positiv and negativ parts, we can employ the calculation for the first variation of varifolds (see e.g. \cite[§ 16]{Simon_Buch})
and we finally obtain
\begin{equation}
 \label{eq:3_10}
 \left(\frac{\partial}{\partial t} Vol(\Phi(t,\cdot)_\sharp V^0)\right)\bigg|_{t=0} = -\int \operatorname{div}(X)\theta_R\, d\mathcal{L}^3 = \int \langle X,\sigma_R\rangle \, d|\triangledown\theta_R|
\end{equation}

\section{Area and volume correction}
\label{sec:4}

In chapter \ref{sec:5} we will apply the graphical decomposition method by Simon, see \cite[§ 3]{Simon} to show partial regularity of the limit. 
This argument entails comparing the minimising sequence to a biharmonic graph in terms of the Helfrich energy.
In order to guarantee that the enclosed volume and area stay the same in this procedure, we need to correct them for the comparing sequence.

Hence we construct in this chapter a two parameter diffeomorphism of $\R^3$, such that we can adjust with it the enclosed volume and area of the changed minimising sequence
outside of the biharmonic comparison region, see e.g. Figure \ref{fig_1}.\\
This idea was introduced by Schygulla in \cite{Schygulla} for a one parameter diffeomorphism and prescribed isoperimetric ratio. 
We will expand this idea by using a version of the inverse function theorem (see Theorem \ref{A_2}) with some explicit bounds on the size of the set of invertibility.
\begin{figure}
 \centering 
\includegraphics{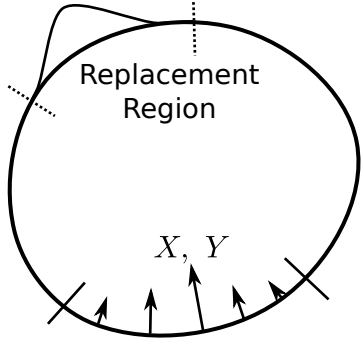}  
\caption{Correcting Area and Volume outside of a prescribed region.}
\label{fig_1}
\end{figure}

Before we can start working on our diffeomorphism, we have to define currents for the minimising sequence $f_k$,
with which we will be able to calculate the enclosed volume as in section \ref{sec:3}.
So let $(R_k)_{k\in\N}\subset D_3(\R^3)$ be sequence of integral currents satisfying (see also \eqref{eq:3_1})
\begin{align}
 \label{eq:4_1}
 \begin{split}
  \partial R_k =& [|V^0_k|]\\
  M(R_k)^\frac{2}{3}\leq &C M([|V^0_k|]).
 \end{split}
\end{align}
Here $C>0$ is given by the isoperimetric inequality \cite[Thm. 30.1]{Simon_Buch}.
As in section \ref{sec:3} there are functions of bounded variation $\theta_{R_k}:\R^3\rightarrow\Z$ representing $R_k$ as in \eqref{eq:3_2}.
Since 
\begin{equation*}
 M(R_k)\leq C M(\partial R_k)^\frac{3}{2} = C M([|V^0_k|])^\frac{3}{2} \leq C \mu_k(\R^3)^\frac{3}{2} = C Vol_0^\frac{3}{2}< \infty 
\end{equation*}
the compactness theorem for currents \cite[Thm. 27.3]{Simon_Buch} rsp. for functions of bounded variation \cite[Section 5.2.3]{EvansGariepy} yield a convergent subsequence of $R_k$ rsp. $\theta_k$ in the current rsp. BV sense.
After relabeling we can hence assume $R_k\rightarrow R$ weakly as currents and $\theta_{R_k}\rightarrow \theta_R$ in the BV sense.
Here $\theta_R:\R^3\rightarrow\Z$ satisfies \eqref{eq:3_2} with respect to $R$.
Since the boundaries of currents also converge, we have
\begin{equation*}
 \partial R = [|V^0|].
\end{equation*}
Since $R$ has finite mass, the uniqueness result \eqref{eq:3_5} also yields $M(R)^\frac{2}{3}\leq CM([|V^0|])$.

Let us define the following sequence of functions corresponding to the area and enclosed volume:
For this we need $X,Y\in C_c^\infty(\R^3,\R^3)$ Let $\Phi_X$ rsp. $\Phi_Y$ be the flow of $X$ rsp. $Y$.
Now let 
\begin{equation}
\label{eq:4_1_1}
 \Phi(s,t,x):= \Phi_X(s,\Phi_Y(t,x))
\end{equation}
for $s,t\in\R$ and $x\in\R^3$. Let
\begin{equation}
 \label{eq:4_2}
 F(s,t):=\left(\begin{array}{c} (\Phi(s,t,\cdot)_\sharp\mu)(\R^3)\\ Vol(\Phi(s,t,\cdot)_\sharp V^0)\end{array}\right).
\end{equation}
and 
\begin{equation}
 \label{eq:4_3}
 F_k(s,t):=\left(\begin{array}{c} (\Phi(s,t,\cdot)_\sharp\mu_k)(\R^3)\\ Vol(\Phi(s,t,\cdot)_\sharp V^0_k)\end{array}\right).
\end{equation}
Since $\Phi$ does not depend on $k$, we readily obtain $F^1_k\rightarrow F^1$ pointwise as $k\rightarrow\infty$.
Furthermore $\theta_{R_k}$ converges in the BV-sense and we therefore have $L^1$-convergence $\theta_{R_k}\rightarrow \theta_R$.
Hence for all $s,t\in\R$
\begin{align*}
&Vol(\Phi(s,t,\cdot)_\sharp V^0_k) \overset{\eqref{eq:3_9}}{=} -\int \theta_{R_k}(\Phi(s,t,\cdot)^{-1}(x))\, d\mathcal{L}^3(x)\\
 =& -\int \theta_{R_k} |\det(D\Phi)|\, d\mathcal{L}^3 \rightarrow -\int \theta_R |\det(D\Phi)|\, d\mathcal{L}^3\\
 =& -\int\theta_{R}(\Phi^{-1}(s,t,x))\,d\mathcal{L}^3 \overset{\eqref{eq:3_9}}{=} Vol(\Phi(s,t,\cdot)_\sharp V^0),
\end{align*}
which yields the pointwise convergence $F_k\rightarrow F$.
Here ${}^{-1}$ and $D$ refer to the $x$-variable of $\Phi$.\\
Since we want to locally invert $F$ and $F_k$, we have to derive $F$ and find $X,Y$ such that $DF(0,0)$ is invertible.
\eqref{eq:3_10} and the usual calculation of the first variation of a varifold (see e.g. \cite[§16]{Simon_Buch}) yields
\begin{equation}
 \label{eq:4_4}
 DF(0,0) = \left(\begin{array}{cc} \int \operatorname{div}_{T_x\mu} X(x)\, d\mu(x) & \int \operatorname{div}_{T_x\mu} Y(x)\, d\mu(x)\\
                  -\int \theta_R\operatorname{div} X\, d\mathcal{L}^3 & -\int \theta_R \operatorname{div} Y\, d\mathcal{L}^3 
                 \end{array}\right)
\end{equation}

To find the necessary vectorfields, we work similarly to \cite[Lemma 4]{Schygulla} and prove the following two lemmas: 
\begin{lemma}[cf. Lemma 4 in \cite{Schygulla}]
 \label{4_1}
 There exists an $r>0$, such that for every $x\in \supp(\mu)$ exists a point $\eta \in \supp(\mu)\setminus B_{r}(x)$, 
 such that for every $\varepsilon>0$ there exists a vectorfield $Y\in C^\infty_0(B_\varepsilon(\eta),\R^3)$ satisfying
 \begin{equation*}
  \int \theta_R \operatorname{div} Y\, dx = -\int \langle Y, \sigma_R\rangle\, d|\triangledown \theta_R|\neq 0. 
 \end{equation*}
\end{lemma}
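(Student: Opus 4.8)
The plan is to recognise that the displayed identity is nothing but the defining relation $\triangledown\theta_R=\sigma_R|\triangledown\theta_R|$ of the gradient measure already recorded in Section \ref{sec:3}, so the entire content of the lemma is (i) the nonvanishing of the integral and (ii) the geometric requirement that a suitable $\eta$ can be found at a definite distance from an arbitrary $x\in\supp(\mu)$. The first step I would carry out is to reduce (i) to a support statement: a point $\eta$ admits, for every $\varepsilon>0$, a field $Y\in C^\infty_0(B_\varepsilon(\eta),\R^3)$ with $\int\langle Y,\sigma_R\rangle\,d|\triangledown\theta_R|\neq0$ if and only if $\eta\in\supp|\triangledown\theta_R|$. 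Indeed, if every such $Y$ gave zero, then the vector measure $\triangledown\theta_R$ would test to zero against all smooth compactly supported fields on $B_\varepsilon(\eta)$, so its restriction to that ball vanishes and hence $|\triangledown\theta_R|(B_\varepsilon(\eta))=0$, i.e.\ $\eta\notin\supp|\triangledown\theta_R|$. Read contrapositively, this same computation shows that $|\triangledown\theta_R|(B_\varepsilon(\eta))>0$, which holds for every $\varepsilon>0$ when $\eta$ is in the support, produces an admissible $Y$.

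Next I would locate $\supp|\triangledown\theta_R|$ inside $\supp(\mu)$ and show that it is large. By \eqref{eq:3_4} the vector measure $\triangledown\theta_R$ has density $(\theta_+-\theta_-)(*\xi)$ with respect to $\mathcal{H}^2\lfloor M$, whence $|\triangledown\theta_R|=|\theta_+-\theta_-|\,\mathcal{H}^2\lfloor M\le(\theta_++\theta_-)\,\mathcal{H}^2\lfloor M=\mu$; in particular $\supp|\triangledown\theta_R|\subseteq\supp(\mu)=M$. To see that this support is nonempty and not a single point I would invoke the standing hypothesis $Vol_0\neq0$: by \eqref{eq:3_4_1} we have $\int\theta_R\,d\mathcal{L}^3=-Vol(V^0)=-Vol_0\neq0$, so the compactly supported $\Z$-valued $BV$ function $\theta_R$ is not a.e.\ constant and therefore $\triangledown\theta_R\not\equiv0$, giving $\supp|\triangledown\theta_R|\neq\emptyset$. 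If this support were a single point $\{p\}$, then $\triangledown\theta_R$ would vanish on the connected open set $\R^3\setminus\{p\}$, forcing $\theta_R$ to be a.e.\ constant, hence $\equiv0$ by compact support, contradicting $\int\theta_R\,d\mathcal{L}^3\neq0$. Thus $D:=\operatorname{diam}(\supp|\triangledown\theta_R|)>0$.

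Finally I would fix $r:=D/3$, a number independent of $x$. Given any $x\in\supp(\mu)$, I choose $a,b\in\supp|\triangledown\theta_R|$ with $|a-b|>2r$, which is possible because $|a-b|$ may be taken arbitrarily close to $D>2r$. The triangle inequality $|a-b|\le|x-a|+|x-b|$ forces $\max(|x-a|,|x-b|)>r$, so at least one of $a,b$ lies outside $B_r(x)$; calling it $\eta$, we obtain $\eta\in\supp|\triangledown\theta_R|\subseteq\supp(\mu)\setminus B_r(x)$, and the support characterisation from the first step furnishes, for every $\varepsilon>0$, a field $Y\in C^\infty_0(B_\varepsilon(\eta),\R^3)$ with the required nonvanishing integral.

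The only genuinely delicate point is the \emph{uniformity} of $r$: the conclusion must hold for every $x\in\supp(\mu)$ with one fixed radius, and this is exactly what the positive, $x$-independent quantity $\operatorname{diam}(\supp|\triangledown\theta_R|)$ supplies, once the one-point support has been excluded via the connectedness argument above. Everything else is a direct unwinding of the $BV$ structure of $\theta_R$ established in Section \ref{sec:3} together with the hypothesis $Vol_0\neq0$.
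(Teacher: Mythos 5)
Your proof is correct, and it takes a genuinely different route from the paper. The paper argues by contradiction: assuming no uniform $r$ exists, it takes sequences $r_k\rightarrow 0$, $x_k\in\supp(\mu)$, uses the compactness of $\supp(\mu)$ (Lemma \ref{2_1}) to extract a limit point $x$, concludes that the vector measure $\sigma_R|\triangledown\theta_R|$ annihilates all test fields supported away from $\{x\}$, and then derives $Vol(V^0)=0$ from \eqref{eq:3_4_1}, contradicting $Vol_0\neq 0$. You instead argue directly: you characterise the admissible points $\eta$ as exactly $\supp|\triangledown\theta_R|$, locate this support inside $\supp(\mu)$ via \eqref{eq:3_4}, rule out that it is empty or a single point (the latter by the constancy of a $BV$ function with vanishing gradient measure on the connected open set $\R^3\setminus\{p\}$, combined with compact support and $\int\theta_R\,d\mathcal{L}^3=-Vol_0\neq0$), and then produce the uniform radius explicitly as $r=\operatorname{diam}(\supp|\triangledown\theta_R|)/3$ by a triangle inequality. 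Both proofs hinge on the same two facts, namely \eqref{eq:3_4_1} together with $Vol_0\neq0$ and the inclusion $\supp|\triangledown\theta_R|\subseteq\supp(\mu)$, but your version buys something the paper's does not: an explicit, quantitative choice of $r$ and a clean treatment of the degenerate one-point case, which the paper's step from ``the measure vanishes off $\{x\}$'' to ``$Vol(V^0)=0$'' leaves implicit (a gradient measure could a priori have an atom, and excluding this requires precisely the kind of argument you give). The price is a slightly longer write-up; the paper's contradiction argument is more compact but less constructive.
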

\begin{proof}
 We proceed by contradiction and assume the statement is false. 
 Then there exist sequences $r_k\rightarrow 0$ and $x_k\in \supp(\mu)$, such that for all $\eta\in \supp(\mu)\setminus B_{r_k}(x_k)$ there is an $\varepsilon_\eta>0$ such that
 \begin{equation*}
    \int \langle Y, \sigma_R\rangle \, d|\triangledown\theta_R| =0.
 \end{equation*}
for all $Y\in C^\infty_0(B_{\varepsilon_\eta}(\eta),\R^3)$. 
By Lemma \ref{2_1} $\supp(\mu)$ is compact and thus we can extract a subsequence and relabel such that $x_k\rightarrow x\in \supp(\mu)$.
Hence we obtain
\begin{equation*}
 \sigma_R=0\ \ |\triangledown\theta_R|\mbox{-a.e.}.
\end{equation*}
Thus by \eqref{eq:3_4_1}
\begin{equation*}
 Vol(V^0)=0,
\end{equation*}
which is a contradiction to $Vol(V^0)=Vol_0\neq 0$.
\end{proof}

\begin{lemma}[cf. Lemma 4 in \cite{Schygulla}]
 \label{4_2}
 Either $H_\mu\in L^\infty(\mu)$ or
 there exists an $r>0$, such that for every $z\in \supp(\mu)$ there exist two points $\eta_Y,\eta_X\in \supp(\mu)\setminus B_{r}(z)$, 
 such that for every $\varepsilon_Y,\varepsilon_X>0$ there exist vectorfields $Y\in C^\infty_0(B_{\varepsilon_Y}(\eta_Y))$ and $X\in C^\infty_0(B_{\varepsilon_X}(\eta_X))$ satisfying
 \begin{align*}
  &\int \operatorname{div}_{T_x\mu} X(x)\, d\mu(x) \int \langle Y,\sigma_R\rangle\, d|\triangledown\theta_R|\\
  -& \int \operatorname{div}_{T_x\mu} Y(x)\, d\mu(x) \int \langle X,\sigma_R\rangle\, d|\triangledown\theta_R|\neq0. 
 \end{align*}
\end{lemma}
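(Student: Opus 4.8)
The displayed quantity is exactly the determinant $\det DF(0,0)$ of the matrix in \eqref{eq:4_4}: abbreviating the two rows as the linear functionals $A(Z):=\int\operatorname{div}_{T_x\mu}Z\,d\mu$ and $V(Z):=-\int\theta_R\operatorname{div}Z\,d\mathcal{L}^3=\int\langle Z,\sigma_R\rangle\,d|\triangledown\theta_R|$ (equal by \eqref{eq:3_10}), the claim is that $A(X)V(Y)-A(Y)V(X)\neq 0$ can be arranged with $X,Y$ supported outside a prescribed ball $B_r(z)$, unless $H_\mu\in L^\infty(\mu)$. Equivalently, apart from the bounded-curvature case, the map $L(Z):=(A(Z),V(Z))\in\R^2$ must attain two linearly independent values on fields localized away from $z$. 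Both components of $L$ are integrals of $Z$ against vector densities with respect to $\mathcal H^2\lfloor M$: since $A(Z)=\delta\mu(Z)=-\int H_\mu\cdot Z\,d\mu$ one has $A(Z)=\int_M a\cdot Z\,d\mathcal H^2$ with $a=-H_\mu(\theta_++\theta_-)$, while \eqref{eq:3_4} together with the decomposition \eqref{eq:2_1} gives $V(Z)=\int_M v\cdot Z\,d\mathcal H^2$ with $v=(*\xi)(\theta_+-\theta_-)$.

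I would argue by contradiction, assuming the second alternative fails. Choosing $r_k\downarrow 0$ produces points $z_k\in\supp(\mu)$ for which the determinant vanishes on all admissible localized fields, and by compactness of $\supp(\mu)$ (Lemma \ref{2_1}) I pass to a subsequence $z_k\to z$. The crucial local observation is the following: for any two distinct points $p,q\in\supp(\mu)\setminus\{z\}$ — both admissible for $z_k$ once $k$ is large, since $\mathrm{dist}(p,z_k),\mathrm{dist}(q,z_k)>r_k$ eventually — the failure of the alternative yields radii so that $A(X)V(Y)=A(Y)V(X)$ for all $X$ supported near $p$ and all $Y$ supported near $q$. Hence the values of $L$ on fields supported near $p$, resp. near $q$, lie in a common line through the origin whenever they are nonzero.

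Because $Vol(V^0)=Vol_0\neq 0$, the density $v$ does not vanish identically, so there is a reference point $p_0\neq z$ whose local values of $L$ form a nonzero line $\ell$. Applying the local observation to the pairs $(p_0,q)$ forces every nonzero local value of $L$ to coincide with $\ell$, and a partition of unity (noting that $L$ annihilates fields supported off $\supp(\mu)$) then shows $L(Z)\in\ell=:\R\,(c,d)$ for all $Z\in C_c^\infty(\R^3\setminus\{z\},\R^3)$. Equivalently $d\,a=c\,v$ holds $\mathcal H^2$-a.e. on $M$ (the single point $z$ is $\mathcal H^2$-null). If $d=0$ then $c\neq0$ forces $v=0$ a.e., i.e. $\theta_+=\theta_-$, whence $Vol(V^0)=0$ by \eqref{eq:3_4_1}, contradicting $Vol_0\neq0$. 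Hence $d\neq0$, and $d\,a=c\,v$ reads $H_\mu=-\tfrac{c}{d}\,\tfrac{\theta_+-\theta_-}{\theta_++\theta_-}\,(*\xi)$; since $|\theta_+-\theta_-|\le\theta_++\theta_-$ this gives $|H_\mu|\le|c/d|$ $\mu$-a.e., so $H_\mu\in L^\infty(\mu)$, the first alternative. This establishes the dichotomy.

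The step I expect to be most delicate is the passage from the purely local information — the determinant vanishing only for fields supported in small balls around prescribed points $p,q$ — to a single global line $\ell$, and hence to one proportionality constant $c/d$. This requires selecting the reference point $p_0$ with genuinely nonzero local image, handling points where the local image degenerates to $\{0\}$, and matching the radii furnished by the negated statement with the freely shrinkable supports so that the transitivity argument across the pairs $(p_0,q)$ is legitimate. Once the relation $d\,a=c\,v$ is secured, the conversion to the $L^\infty$ bound via the density ratio being at most $1$ is immediate.
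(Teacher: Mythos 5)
Your proposal is correct and follows essentially the same route as the paper's proof: a contradiction argument using compactness of $\supp(\mu)$ (Lemma \ref{2_1}), a reference point whose nearby fields have non-vanishing volume functional (the content of Lemma \ref{4_1}, which you re-derive inline from $Vol_0\neq 0$), the resulting pointwise proportionality $H_\mu(\theta_++\theta_-)=\mathrm{const}\cdot(*\xi)(\theta_+-\theta_-)$ $\mu$-a.e.\ away from the exceptional point $z$, and the bound $|\theta_+-\theta_-|\leq \theta_++\theta_-$. Your packaging via the $\R^2$-valued functional $L$ and a common line $\ell$ (including the vacuous case $d=0$) is only a cosmetic variant of the paper's fixed-$Y$ argument leading to \eqref{eq:4_6}, and the radius-matching subtlety you flag is exactly the one the paper handles by shrinking supports via Lemma \ref{4_1}.
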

\begin{proof}
Let us assume that we do not find an $r>0$ as requested. Then we find sequences $r_k\searrow 0$, $z_k\in \supp(\mu)$ such that for every
$\eta_Y,\eta_X\in \supp(\mu)\setminus B_{r_k}(z_k)$ there are $\varepsilon_{Y,\eta_Y},\varepsilon_{X,\eta_X}>0$, which satisfy
 \begin{align}
 \begin{split}
 \label{eq:4_5}
  &\int \operatorname{div}_{T_x\mu} X(x)\, d\mu(x) \int \langle Y,\sigma_R\rangle\, d|\triangledown\theta_R|\\
  -& \int \operatorname{div}_{T_x\mu} Y(x)\, d\mu(x) \int \langle X,\sigma_R\rangle\, d|\triangledown\theta_R|=0
 \end{split}
 \end{align}
 for every $Y\in C^\infty_0(B_{\varepsilon_{Y,\eta_Y}}(\eta_Y))$ and $X\in C^\infty_0(B_{\varepsilon_{X,\eta_X}}(\eta_X))$.
Since $\supp(\mu)$ is compact, we find a convergent subsequence of $z_k$ and after relabeling we have $z_k\rightarrow z\in \supp(\mu)$.
Hence for every $\eta_Y,\eta_X\in \supp(\mu)\setminus\{z\}$ there are $\varepsilon_{Y,\eta_Y},\varepsilon_{X,\eta_X}>0$, such that
every $Y\in C^\infty_0(B_{\varepsilon_{Y,\eta_Y}}(\eta_Y),\R^3)$ and $X\in C^\infty_0(B_{\varepsilon_{X,\eta_X}}(\eta_X),\R^3)$ satisfies \eqref{eq:4_5}.
According to Lemma \ref{4_1} we find an $\eta\in \supp(\mu)\setminus\{z\}$ and a $Y\in C^\infty_0(B_{\varepsilon_{Y,\eta}}(\eta))$ with 
\begin{equation*}
 \int \langle Y, \sigma_R\rangle \, d|\triangledown\theta_R| \neq 0
\end{equation*}
From now on this $Y$ is fixated.
All in all we find for every $\eta_X\in \supp(\mu)\setminus\{z\}$ a radius $\varepsilon_{\eta_X}$ such that for every $X\in C^\infty_0(B_{\varepsilon_{X,\eta_X}})$ we have
 \begin{align*}
  &\int \operatorname{div}_{T_x\mu} X(x)\, d\mu(x) \int \langle Y,\sigma_R\rangle\, d|\triangledown\theta_R|\\
  -& \int \operatorname{div}_{T_x\mu} Y(x)\, d\mu(x) \int \langle X,\sigma_R\rangle\, d|\triangledown\theta_R|=0
 \end{align*}
 Hence 
  \begin{align*}
 \int_{\supp(\mu)} \langle H_\mu, X\rangle (\theta_++\theta_-)\,d\mathcal{H}^2 =& -\int \operatorname{div}_{T_x\mu} X(x)\, d\mu(x)\\
 =& C(\varepsilon_{Y})\int \langle X,\sigma_R\rangle\, d|\triangledown\theta_R|\\
  \overset{\eqref{eq:3_4}}{=} &C(\varepsilon_{Y}) \int \langle X,(*\xi)\rangle\, dV^0\\
  =& C(\varepsilon_{Y})\int_{\supp(\mu)} \langle X(x),(*\xi(x))\rangle (\theta_+-\theta_-)\, d\mathcal{H}^2(x)
 \end{align*}
 Since $\eta_X$ is arbitrary and $\mu(\{z\})=0$ we have 
\begin{equation}
\label{eq:4_6}
 H_{\mu} (\theta_+ + \theta_-) = C(\varepsilon_{Y})\cdot (*\xi)(\theta_+-\theta_-)\quad \mu\mbox{-a.e.}.
\end{equation}
Furthermore $\theta_+ + \theta_-\geq 1$ $\mu$-a.e. and $\theta_\pm\in\N_0$ finally yield $H_\mu\in L^\infty(\mu)$.

\end{proof}




Since we like to apply Theorem \ref{A_2} to $F_k$, we need to be able to estimate the difference of two values of the first derivative independently of $k$.
To do this, we employ the mean value theorem and hence need to estimate the second derivative:
\begin{lemma}
 \label{4_4}
 For every $X,Y\in C^\infty_c(\R^3,\R^3)$ and every $T>0$, there exists a constant $C=C(\|\Phi\|_{C^3(B_{T}(0)\times\R^3)}, Area_0,Vol_0)>0$, 
 such that for every $s,t\in B_{T}(0)$ we have for every $k\in\N$:
 \begin{equation*}
  |D^2 F_k(s,t)|\leq C.
 \end{equation*}
\end{lemma}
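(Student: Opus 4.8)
The plan is to differentiate each of the two components of $F_k=(F_k^1,F_k^2)$ twice under the integral sign, thereby isolating the $k$-dependence in the underlying measure $\mu_k$ (for the area) and the function $\theta_{R_k}$ (for the volume), each of which has uniformly bounded total mass. The factor that is actually differentiated in $(s,t)$ will then depend only on $\Phi$, and its second-order $(s,t)$-derivatives will be estimated by $\|\Phi\|_{C^3}$. The whole argument lives on a fixed compact region: since $X,Y$ are compactly supported, the flows $\Phi_X,\Phi_Y$ and hence $x\mapsto\Phi(s,t,x)$ are diffeomorphisms of $\R^3$ that equal the identity off a fixed compact set, and by \eqref{eq:2_10} the relevant mass is concentrated in $B_N(0)$. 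On $B_T(0)\times\overline{B_N(0)}$ the spatial differential $D_x\Phi$ is invertible with uniformly bounded inverse.

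For the area component I would write the mass of the pushforward as $F_k^1(s,t)=\int J_x\Phi(s,t,\cdot)\,d\mu_k(x)$, where $J_x\Phi(s,t,\cdot)$ denotes the tangential Jacobian of the spatial map on the approximate tangent plane $T_x\mu_k$, that is the square root of the Gram determinant of $D_x\Phi(s,t,x)$ restricted to $T_x\mu_k$. Differentiating twice in $(s,t)$ produces the mixed derivatives $\partial^2_{(s,t)}D_x\Phi$ together with lower-order products, all pointwise bounded by $C(\|\Phi\|_{C^3})$; the only quotient appearing comes from differentiating the square root, and the Gram determinant is bounded below on $B_T(0)\times\overline{B_N(0)}$ by the uniform invertibility of $D_x\Phi$. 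Hence the integrand of the differentiated expression is bounded by $C(\|\Phi\|_{C^3})$ uniformly in $x$, and integrating against $\mu_k$ gives $|D^2F_k^1(s,t)|\le C(\|\Phi\|_{C^3})\,\mu_k(\R^3)=C(\|\Phi\|_{C^3})\,Area_0$.

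For the volume component I would start from \eqref{eq:3_9} and change variables $x=\Phi(s,t,y)$ to obtain $F_k^2(s,t)=-\int\theta_{R_k}(y)\,|\det D_x\Phi(s,t,y)|\,d\mathcal{L}^3(y)$. Since $\Phi(0,0,\cdot)$ is the identity and $\Phi(s,t,\cdot)$ is a diffeomorphism, $\det D_x\Phi$ stays positive, so the absolute value is harmless and the integrand is a degree-three polynomial in the entries of $D_x\Phi$; its second $(s,t)$-derivatives are again bounded by $C(\|\Phi\|_{C^3})$ and, because $X,Y$ are compactly supported, vanish outside a fixed compact set in $y$. Differentiating under the integral and using the uniform current bound $\int|\theta_{R_k}|\,d\mathcal{L}^3=M(R_k)\le C$ from \eqref{eq:4_1} then yields $|D^2F_k^2(s,t)|\le C(\|\Phi\|_{C^3},Area_0)$.

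Combining the two estimates proves the lemma. The only point requiring genuine care is keeping every constant independent of $k$, and this is exactly where the structure pays off: $k$ enters solely through $\mu_k$ and $\theta_{R_k}$, whose masses $\mu_k(\R^3)=Area_0$ and $M(R_k)\le C$ are uniform, while the differentiated Jacobian and determinant factors depend on $\Phi$ alone. The two supporting technicalities are the justification of differentiation under the integral --- routine here because the integrand and its $(s,t)$-derivatives are jointly continuous and dominated uniformly on the compact region --- and the uniform lower bound on the tangential Gram determinant needed to control the square root in the area Jacobian, both of which follow from the diffeomorphism property of $\Phi$ on $B_T(0)\times\overline{B_N(0)}$.
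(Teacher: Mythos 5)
Your proposal is correct and follows essentially the same route as the paper: the area component is written as $\int J_{\mu_k}\Phi(s,t,\cdot)\,d\mu_k$ and the volume component as $-\int\theta_{R_k}\,|\det D_x\Phi(s,t,\cdot)|\,d\mathcal{L}^3$, the $(s,t)$-derivatives of the $\Phi$-dependent integrands are bounded uniformly, and the $k$-dependence is absorbed by $\mu_k(\R^3)=Area_0$ and the isoperimetric mass bound on $R_k$. Your added care about differentiating under the integral, the positivity of the determinant, and the lower bound on the Gram determinant only fills in details the paper leaves implicit.
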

\begin{proof}
We start by estimating $F^2$:\\
Since \eqref{eq:3_9} only needs the corresponding map to be a diffeomorphism, we obtain the same result for $x\mapsto \Phi(s,t,x)$.
Furthermore the usual substitution formula yields:
\begin{align}
 \label{eq:4_7}
 \begin{split}
 &F^2_k(s,t) =  -\int \theta_{R_k}(\Phi(s,t,\cdot)^{-1}(x))\, d\mathcal{L}^3(x)\\ =& -\int |\det(D_x\Phi(s,t,x))|\theta_{R_k}(x)\, d\mathcal{L}^3(x).
\end{split}
 \end{align}
Now there exists a constant $C>0$ only dependend on $T>0$, $X$, $Y$ and their respective derivatives, such that
for every $s,t \in (-T,T)$ we have
\begin{equation*}
 D_{s,t}|\det(D_x\Phi(s,t,x))|, D_{s,t}^2|\det(D_x\Phi(s,t,x))| \leq C.
\end{equation*}
Hence for these $s,t$ the isoperimetric inequality \cite[Thm 30.1]{Simon_Buch} yields
\begin{equation*}
 |D^2F^2_k (s,t)|\leq C\int |\theta_{R_k}|\, d\mathcal{L}^3 \leq C(\mu_k(\R^3))^\frac{3}{2} = C(Area_0)^\frac{3}{2}.
\end{equation*}
Now let us turn to $F^1$:\\
Since $x\mapsto \Phi(s,t, x)$ is a diffeomorphism, we can apply \cite[Eq. 15.7]{Simon_Buch} and obtain
\begin{equation}
 \label{eq:4_8}
 F^1_k(s,t)= \int (J_{\mu_k} \Phi(s,t,x))\, d\mu_k.
\end{equation}
Furthermore in the Jacobian $J_{\mu_k}\Phi(s,t,x)$ the measure $\mu_k$ is independent of $s$ and $t$.
Also the derivatives $d^{\mu_k}\Phi(s,t,x)$ appearing in the jacobian, can be estimated by the full derivative of $x\mapsto\Phi(s,t,x)$.
Therefore there is a constant $C>0$ only dependend on $T>0$, $X$, $Y$ and their respective derivatives, such that
for every $s,t\in(-T,T)$ we have
\begin{equation*}
 |J_{\mu_k} \Phi(s,t,x)|\leq C.
\end{equation*}
This yields
\begin{equation*}
 |D^2F^1(s,t)| \leq C\mu_k(\R^3) = C\cdot Area_0,
\end{equation*}
which is the desired conclusion.
\end{proof}

If we change the minimising sequence by $\Phi$, we also have to be sure, that the Helfrich energy and the second fundamental form is controlled as well:
\begin{lemma}[cf. \cite{Schygulla} p. 915, Eq. (v)]
 \label{4_5}
  For every $X,Y\in C^\infty_c(\R^3,\R^3)$ and every $T>0$, there exists a constant $C=C(\|\Phi\|_{C^3(B_{T}(0)\times\R^3)}, Area_0,Vol_0)>0$ (independent of $k$!),
  such that for every $s,t\in (-T,T)$ we have
  \begin{equation*}
   \left|D_{s,t}\int |A_{k}^{s,t}|^2\, d\mu_k\right|\leq C. 
  \end{equation*}
Here $A^{s,t}_{k}$ is the second fundamental form of $\Sigma\ni x\mapsto\Phi(s,t, f_k(x))$.
\end{lemma}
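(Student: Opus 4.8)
The plan is to write the integrand $|A_k^{s,t}|^2$ explicitly in terms of the derivatives of $\Phi$ and of $f_k$, to observe that it is at most quadratic in the original second fundamental form $A_k$ with coefficients that are smooth in $(s,t)$ and uniformly bounded in $k$, and then to differentiate under the integral sign. First I would fix local coordinates on $\Sigma$ and set $g_k^{s,t}:=\Phi(s,t,\cdot)\circ f_k$, so that $\partial_i g_k^{s,t}=D_x\Phi\,\partial_i f_k$ and, by the Gauss formula $\partial_i\partial_j f_k=A_{k,ij}\nu_k+\Gamma_{ij}^l\partial_l f_k$,
\[
  \partial_i\partial_j g_k^{s,t}
    = D_x^2\Phi(\partial_i f_k,\partial_j f_k)
      + A_{k,ij}\,D_x\Phi\,\nu_k
      + \Gamma_{ij}^l\,D_x\Phi\,\partial_l f_k .
\]
Since $D_x\Phi\,\partial_l f_k=\partial_l g_k^{s,t}$ is tangent to the image, its pairing with the unit normal $\nu^{s,t}$ of $g_k^{s,t}$ vanishes, the Christoffel term drops out, and the components of the second fundamental form become
\[
  A^{s,t}_{k,ij}
    = \big\langle D_x^2\Phi(\partial_i f_k,\partial_j f_k),\nu^{s,t}\big\rangle
      + A_{k,ij}\,\big\langle D_x\Phi\,\nu_k,\nu^{s,t}\big\rangle .
\]
This exhibits $A^{s,t}_{k,ij}$ as an affine function of $A_{k,ij}$ whose coefficients depend only on the first order data $\nu_k,\partial_i f_k$ of $f_k$ and on the first and second spatial derivatives of $\Phi$; here $\nu^{s,t}$ is the normalisation of $(D_x\Phi)^{-T}\nu_k$.

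Next I would establish a uniform pointwise bound. On the compact set $\overline{B_T(0)}\times\overline{B_N(0)}$ --- recall $f_k(\Sigma)\subset B_N(0)$ by \eqref{eq:2_10} and each $\Phi(s,t,\cdot)$ is a diffeomorphism --- the map $D_x\Phi$ is uniformly invertible, with $\|D_x\Phi\|$ and $\|(D_x\Phi)^{-1}\|$ bounded by $C(\|\Phi\|_{C^3},T)$ independently of $k$ and of $(s,t)$. Consequently the induced metric $g^{s,t}_{ij}=\langle D_x\Phi\,\partial_i f_k,D_x\Phi\,\partial_j f_k\rangle$ is uniformly comparable to $(g_k)_{ij}=\langle\partial_i f_k,\partial_j f_k\rangle$, so the inverse metric $(g^{s,t})^{ij}$ and the normal $\nu^{s,t}$ are controlled uniformly. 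Contracting the affine formula with two inverse metrics and using that $(g_k)^{ik}\,\partial_i f_k\otimes\partial_k f_k$ is a bounded tangential projection yields the pointwise estimate $|A^{s,t}_k|^2\le C(1+|A_k|^2)$, with $C=C(\|\Phi\|_{C^2},Area_0)$ uniform in $k$ and $(s,t)\in\overline{B_T(0)}$.

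The derivative estimate follows the same pattern. Differentiating $A^{s,t}_{k,ij}$, the inverse metric, and the normal once in $s$ or $t$ replaces a spatial derivative of $\Phi$ of order at most two by a mixed derivative of order at most three, hence keeps every $\Phi$-factor bounded through $\|\Phi\|_{C^3}$; crucially, this differentiation does not raise the polynomial degree in $A_k$, so $\big|D_{s,t}|A^{s,t}_k|^2\big|\le C(\|\Phi\|_{C^3},Area_0)(1+|A_k|^2)$ pointwise. Since $\mu_k$ does not depend on $(s,t)$ and the dominating function $C(1+|A_k|^2)$ lies in $L^1(\mu_k)$ uniformly --- by $\mu_k(\R^3)=Area_0$ and the curvature bound $\int|A_k|^2\,d\mu_k\le C$ from Section \ref{sec:2} --- I may differentiate under the integral sign and integrate the pointwise estimate to obtain
\[
  \Big|D_{s,t}\int|A^{s,t}_k|^2\,d\mu_k\Big|
    \le \int \big|D_{s,t}|A^{s,t}_k|^2\big|\,d\mu_k
    \le C\int(1+|A_k|^2)\,d\mu_k \le C .
\]

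The main obstacle is the bookkeeping in the last two steps: keeping the bounds for $(g^{s,t})^{ij}$ and for the normalised normal $\nu^{s,t}$, together with their $(s,t)$-derivatives, uniform in $k$. This hinges entirely on the uniform invertibility of $D_x\Phi$ over $\overline{B_T(0)}\times\overline{B_N(0)}$, which is precisely what makes the transformed metric uniformly non-degenerate; once that is secured, every term is a product of a $\Phi$-factor bounded via $\|\Phi\|_{C^3}$ and a curvature factor of degree at most two in $A_k$, and the uniform $L^1(\mu_k)$-control of $1+|A_k|^2$ closes the argument.
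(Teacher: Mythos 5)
Your proof is correct, and its overall skeleton coincides with the paper's: both establish the pointwise estimate $\bigl|D_{s,t}|A_k^{s,t}|^2\bigr|\leq C\left(1+|A_k|^2\right)$ with $C$ uniform in $k$ and $(s,t)\in\overline{B_T(0)}$, and then integrate against the fixed measure $\mu_k$, closing with $\mu_k(\R^3)=Area_0$ and the uniform bound on $\int|A_k|^2\,d\mu_k$ from Section \ref{sec:2}. Where you differ is in how that pointwise bound is derived. The paper localises via a partition of unity and a rigid motion so that $f_k$ is a graph of $u_k$ with $|\nabla u_k|\leq 1$, computes first and second derivatives of $\Phi(s,t,(x,u_k(x)))$ explicitly, controls the normal through the non-collapsing bound $\left|\partial_1\times\partial_2\right|\geq C$ coming from $|\det D_x\Phi|\geq C$, arrives at the intermediate bound $C\left(1+\|\nabla u_k\|_{L^\infty}^4+\|D^2u_k\|_{L^\infty}^2\right)$, and then needs the external reference \cite{DeckGruRoe} (valid only because of the small Lipschitz norm) to convert $D^2u_k$ into $A_k$. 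You instead work in arbitrary local coordinates and use the Gauss formula to exhibit the clean structural identity
\begin{equation*}
 A^{s,t}_{k,ij}=\bigl\langle D_x^2\Phi(\partial_i f_k,\partial_j f_k),\nu^{s,t}\bigr\rangle+A_{k,ij}\,\bigl\langle D_x\Phi\,\nu_k,\nu^{s,t}\bigr\rangle,
\end{equation*}
i.e. affine dependence of $A^{s,t}_k$ on $A_k$ with coefficients controlled by $\|\Phi\|_{C^3}$ and the uniform invertibility of $D_x\Phi$ on $\overline{B_T(0)}\times\overline{B_N(0)}$; this replaces both the graph reduction and the citation, and makes manifest why differentiating in $(s,t)$ raises the order of $\Phi$-derivatives but not the polynomial degree in $A_k$. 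Your route is more invariant and self-contained; the paper's route is more pedestrian but recycles the graph machinery it already deploys throughout Section \ref{sec:5}. Two small points of care in your version: the uniform lower bound on $|(D_x\Phi)^{-T}\nu_k|$ (needed when differentiating the normalised normal $\nu^{s,t}$) should be stated explicitly, and the constant in your pointwise bound does not actually require $Area_0$ --- that dependence enters only at the integration step.
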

\begin{proof}
 By a partition of unity on $\Sigma$ and a rigid motion we can assume, that we can write $x\mapsto\Phi(s,t,f_k(x))$ locally as a smooth graph with small Lipschitz norm.
 Hence we have $u_k:\R^2\supset B_r(0)\rightarrow\R$ smooth with $|\nabla u_k|\leq 1$ and $u_k(0)=0$, which satisfies for a small open set $U\subset \Sigma$:
 \begin{equation*}
  f_k(U)=\operatorname{graph}(u_k).
 \end{equation*}
Now we can calculate the second fundamental form of $\Phi(s,t,f_k(\cdot))$ by using $u_k$ and chain rule:
\begin{align}
 \label{eq:4_9}
 \begin{split}
 \partial_i\left(\Phi(s,t,(x,u_k(x)))\right) =& \sum_{j=1}^2(\partial_j\Phi)(s,t,(x,u_k(x)))\delta_{ij}\\& + (\partial_3\Phi)(s,t,(x,u_k(x)))\partial_i u_k(x)\\
 =& (\partial_i\Phi)(s,t,(x,u_k(x))) + (\partial_3\Phi)(s,t,(x,u_k(x)))\partial_i u_k(x).
\end{split}
 \end{align}
Then the second derivatives are as follows:
\begin{align}
\label{eq:4_10}
\begin{split}
\partial_\ell \partial_i\left(\Phi(s,t,(x,u_k(x)))\right) = & \sum_{m=1}^2 (\partial_m\partial_i\Phi)(s,t,(x,u_k(x))\delta_{m\ell} \\
&+ (\partial_3\partial_i\Phi)(s,t,(x,u_k(x))\partial_\ell u_k(x)\\
& + \sum_{m=1}^2(\partial_m \partial_3\Phi)(s,t,(x,u_k(x))) \delta_{m\ell}\partial_i u_k(x)\\
& + (\partial_3^2\Phi)(s,t,(xu_k(x)))\partial_\ell u_k(x)\partial_iu_k(x)\\
& + \partial_3\Phi(s,t,(x,u_k(x)))\partial_\ell\partial_iu_k(x).
 \end{split}
\end{align}
The unit normal can be expressed by 
\begin{equation*}
 n(s,t,x)=\pm\frac{\partial_1\left(\Phi(s,t,(x,u_k(x)))\right)\times \partial_2\left(\Phi(s,t,(x,u_k(x)))\right)}{\left|\partial_1\left(\Phi(s,t,(x,u_k(x)))\right)\times \partial_2\left(\Phi(s,t,(x,u_k(x)))\right)\right|}
\end{equation*}
Since $x\mapsto\Phi(s,t,x)$ is a diffeomorphism of $\R^3$, such that it is the identity outside of a ball, we find a constant $C=C(T)>0$, 
such that for every $(s,t)\in B_{T}(0)$ we have
\begin{equation*}
|\det D_x\Phi(s,t,x)|\geq C.
\end{equation*}
This yields a noncollapsing of the basis of the tangential space, i.e.
\begin{equation*}
 \left|\partial_1\left(\Phi(s,t,(x,u_k(x)))\right)\times \partial_2\left(\Phi(s,t,(x,u_k(x)))\right)\right|\geq C>0
\end{equation*}
for a possibly different constant $C=C(T)>0$.
The second fundamental form can be expressed in these coordinates as
\begin{equation*}
 (A_{k}^{s,t})_{i\ell} = \langle \partial_\ell \partial_i\left(\Phi(s,t,(x,u_k(x)))\right) , n(s,t,x)\rangle.
\end{equation*}
All in all we therefore have
\begin{equation*}
 D_{s,t}\left(|A_{k}^{s,t}|^2\right)\leq C\left(1+\|\nabla u_k\|_{L^\infty}^4 + \|D^2 u_k\|_{L^\infty}^2\right)
\end{equation*}
with some constant $C>0$ independent of $k$. 
Since we imposed a small Lipschitz Norm on $u_k$ we deduce by e.g. \cite[p. 5]{DeckGruRoe}
\begin{equation*}
 D_{s,t}\left(|A_{k}^{s,t}|^2\right)\leq C(1+|A_{k}|^2).
\end{equation*}
Hence
\begin{equation*}
 \left|D_{s,t}\int |A_{k}^{s,t}|^2\, d\mu_k\right|\leq C
\end{equation*}
and $C$ does not depend on $k$.
\end{proof}

\begin{remark}
\label{4_6}
With the same techniques as in Lemma \ref{4_5} we can also show (cf. \eqref{eq:4_7})
\begin{equation*}
 |D_{s,t}W_{H_0}(f_k^{s,t}) + D_{s,t}\operatorname{Area}(f_k^{s,t})| \leq C
\end{equation*}
for all $(s,t)\in B_{T}(0)$
and $C$ is independent of $k$. Here $f_k^{s,t}$ is the immersion $\Sigma\ni x\mapsto \Phi(s,t,f_k(x))$.
\end{remark}

\section{Partial regularity and lower-semicontinuity}
\label{sec:5}
In this section we adapt the partial regularity method introduced by Simon (see \cite[Section 3]{Simon}).
This method is based on replacing parts of the minimising sequence with biharmonic graphs and compare the resulting energies. 
Here we use the idea of Schygulla \cite[Lemma 5]{Schygulla} to correct the area and enclosed volume of the modified sequence, 
so that they become competitors for the minimum of the Helfrich energy again.

Let $\varepsilon_0>0$ be fixated. 
In dependence of this $\varepsilon_0$ we say $x_0\in \operatorname{spt}(\mu)$ is a good point iff (see \eqref{eq:2_8})
\begin{equation*}
 \nu(\{x_0\}) < \varepsilon_0^2.
\end{equation*}
In neighbourhoods of these good points we will show $C^{1,\beta}$ regularity and a graphical decomposition of $\mu$.
Since we work with immersions with possible self intersections, we also use the ideas of Schätzle 
(see \cite[Prop. 2.2]{Schaetzle}) to implement Simon's regularity method (cf. \cite[Section 3]{Simon}).
The lemma is now as follows (cf. also \cite[Lemma 3.1]{EichmannHelfrichBoundary}):

\begin{lemma}
 \label{5_1}
  For any $\varepsilon>0$ there exist $\varepsilon_0 = \varepsilon_0(H_0, Vol_0, Area_0, \varepsilon)>0$, $\theta=\theta(H_0, Vol_0, Area_0,\varepsilon)>0$,
  $\rho_0 = \rho_0(H_0, Vol_0, Area_0,\varepsilon)>0$ and
 $\beta = \beta(H_0, Vol_0, Area_0) > 0$, such that for every good point $x_0\in \operatorname{spt}(\mu)$ and good radius $0<\rho_{x_0}\leq \rho_0$ satisfying
 \begin{equation}
 \label{eq:5_1}
  \nu(\overline{B_{\rho_{x_0}}(x_0)}) < \varepsilon_0^2,
 \end{equation}
 $\mu$ is a union of $(W^{2,2}\cap C^{1,\beta})$-graphs in $B_{\theta\rho_{x_0}}(x_0)$ 
 of functions $u_i\in (W^{2,2}\cap C^{1,\beta})(B_{\theta \rho_{x_0}}(x_0)\cap L_i)$. 
 Here $L_i\subset \R^3$ are two dimensional affine spaces and $i=1,\ldots, I_{x_0}\leq C(Area_0,Vol_0, H_0)$. 
 Furthermore the $u_i$ satisfy the following estimate
 \begin{align}
  \begin{split}
  \label{eq:5_2}
  &(\theta \rho_{x_0})^{-1}\|u_i\|_{L^\infty\left(B_{\theta\rho_{x_0}}(x_0)\cap L_i\right)}\\
  +\ & \|\nabla u_i\|_{L^\infty\left(B_{\theta\rho_{x_0}}(x_0)\cap L_i\right)} + (\theta\rho_{x_0})^\beta h\ddot{o}l_{B_{\theta\rho_{x_0}}(x_0)\cap L_i,\beta}\left(\nabla u_i\right)\leq \varepsilon.
 \end{split}
 \end{align}
Moreover we have a power-decay for the second fundamental form, i.e. $\forall x\in B_{\frac{\theta\rho_{x_0}}{4}}(x_0)$, $0<\rho<\frac{\theta \rho_{x_0}}{4}$
\begin{equation}
 \label{eq:5_3}
 \int_{B_\rho(x)}|A_{\mu}|^2\, d(\mu)\leq C(H_0,Area_0,Vol_0)(\varepsilon_0^2+\rho_0)\rho^\beta\rho_{x_0}^{-\beta}.
\end{equation}
\end{lemma}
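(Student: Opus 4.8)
The plan is to adapt Simon's graphical-decomposition and biharmonic-comparison scheme from \cite[Section 3]{Simon}, in the immersed formulation of Sch\"atzle \cite[Prop. 2.2]{Schaetzle}, and to feed in the area--volume correction of Section \ref{sec:4} so that the comparison surface remains admissible for the constrained problem. First I would exploit the smallness hypothesis \eqref{eq:5_1}: since $\nu(\overline{B_{\rho_{x_0}}(x_0)}) < \varepsilon_0^2$ and $|A_\mu|^2\mu \le \nu$ by \eqref{eq:2_8}, the scale-invariant curvature energy $\int_{B_{\rho_{x_0}}(x_0)} |A_\mu|^2 \, d\mu$ is small. Combined with the monotonicity formula and integrality (the density of $\mu$ is well defined and $\ge 1$, and $\mu$ has no generalized boundary), Simon's decomposition lemma then writes $\mu$ over a slightly smaller ball as a finite union of Lipschitz graphs $u_i$ on planes $L_i$ with small Lipschitz constant, away from an exceptional set of small measure. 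The number of sheets $I_{x_0}$ is bounded through the global bounds \eqref{eq:2_6_1} and \eqref{eq:2_8}, which is where $I_{x_0}\le C(Area_0,Vol_0,H_0)$ originates.

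Next comes the comparison step. On a dyadic annulus inside $B_{\theta\rho_{x_0}}(x_0)$ I would replace each graph $u_i$ by the biharmonic function $\tilde u_i$ sharing the boundary values and the normal derivative of $u_i$; since the biharmonic replacement minimises the leading-order term $\int|\Delta \tilde u_i|^2$ of the linearised Helfrich integrand, the modified surface carries controlled energy with a quantitative gain. The essential new difficulty---the reason Simon's argument cannot be applied verbatim, as stressed in the introduction---is that this replacement alters both $Area$ and $Vol$, so the modified immersion is no longer in $M_{Area_0,Vol_0}$ and cannot be tested against $E_{H_0,Area_0,Vol_0}$. Here I would invoke Section \ref{sec:4}: using the dichotomy of Lemma \ref{4_2} I select vector fields $X,Y$ supported in $\supp(\mu)\setminus B_r(x_0)$, i.e. far from the replacement region, so that $DF_k(0,0)$ in \eqref{eq:4_4} is invertible uniformly in $k$. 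The quantitative inverse function theorem (Theorem \ref{A_2}), fed with the $k$-independent second-derivative bound of Lemma \ref{4_4}, then produces parameters $(s,t)$ with $|(s,t)|$ controlled by the area--volume defect of the replacement, restoring $F_k(s,t)=(Area_0,Vol_0)$. By Remark \ref{4_6} the additional Helfrich energy and area generated by this correction are bounded proportionally to $|(s,t)|$ and hence are negligible; in the exceptional case $H_\mu\in L^\infty$ of Lemma \ref{4_2} the correction degenerates, but bounded mean curvature makes the admissible competitor available directly.

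Feeding the corrected surface into the minimality of $f_k$ yields a decay (excess) inequality: the genuine curvature energy on the annulus exceeds that of the biharmonic model by at most the correction cost, which forces the scale-invariant excess to contract geometrically across dyadic scales. Iterating this contraction produces the Morrey-type bound \eqref{eq:5_3}, and a standard Morrey embedding then upgrades the Lipschitz graphs to $(W^{2,2}\cap C^{1,\beta})$ graphs obeying \eqref{eq:5_2}, the $W^{2,2}$ regularity coming from $A_\mu\in L^2$ and the H\"older control from the power decay. I expect the main obstacle to be precisely the simultaneous, $k$-uniform correction of area and volume: one must keep both correction parameters small enough that their energy cost is genuinely absorbed into the gain of the biharmonic replacement, while guaranteeing through Lemmas \ref{4_1}, \ref{4_2} and \ref{4_4} that the linearisation $DF_k(0,0)$ stays uniformly invertible along the entire sequence and localised away from the region being regularised.
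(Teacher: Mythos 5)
Your proposal assembles exactly the ingredients the paper uses (Simon's graphical decomposition in Sch\"atzle's immersed form, the biharmonic-type replacement of Lemma \ref{A_3}, the two-parameter correction of Section \ref{sec:4} via Lemma \ref{4_2} and Theorem \ref{A_2}, hole-filling iteration, Allard), but there is a genuine structural gap in \emph{where} you perform the decomposition and the replacement. You decompose the limit $\mu$ into Lipschitz graphs (invoking $|A_\mu|^2\mu\le\nu$), replace those graphs biharmonically, and then ``feed the corrected surface into the minimality of $f_k$.'' This step fails: a surface obtained by modifying $\mu$ is not a modification of any $f_k$, so the minimality of $f_k$ yields no inequality for it; moreover, at this stage $W_{H_0}$ of the limit cannot be compared with $\lim_k W_{H_0}(f_k)$, since that comparison is precisely the lower-semicontinuity being proven. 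In addition, the graphical decomposition lemma you invoke is a statement about smooth immersions, whereas $\mu$ is a priori only an integral varifold with weak second fundamental form, so it cannot be applied to $\mu$ directly. The paper's proof instead transfers the smallness hypothesis \eqref{eq:5_1} to the sequence by upper semicontinuity of Radon-measure convergence, $\limsup_k\int_{B_{\rho_{x_0}}(x_0)}|A_k|^2\,d\mu_k\le\nu(\overline{B_{\rho_{x_0}}(x_0)})<\varepsilon_0^2$, decomposes the smooth immersions $f_k$ themselves, replaces the sheets $v_{k,i,\ell}$ by $w_{k,i,\ell}$ to form immersions $f^{graph}_{k,\sigma}$ of the same topological type, corrects their area and volume by $\Phi(s_k,t_k,\cdot)$ so that they become admissible competitors, tests these against the minimality of $f_k$, derives the decay inequality for the sequence measures $|A_k|^2\mu_{k,i}$, and only then passes to the limits $\nu_i$ and uses $|A_{\mu_i}|^2\mu_i\le\nu_i$ before applying Allard's theorem (Theorem \ref{A_1}) to the sheets $\mu_i$.

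Two further points. In the exceptional branch of Lemma \ref{4_2}, i.e. $H_\mu\in L^\infty(\mu)$, no competitor is constructed at all; the paper uses Sch\"atzle's quadratic tilt-excess decay and $C^2$-rectifiability/locality results to conclude $H_{\mu_i}=H_\mu$ $\mu_i$-a.e., hence $H_{\mu_i}\in L^\infty(\mu_i)$, and then applies the standard Allard theorem, so your phrase ``bounded mean curvature makes the admissible competitor available directly'' misdescribes this case. Finally, you leave implicit the scaling comparison on which the correction step hinges: the area and volume defects of the replacement are quadratic, of order $\rho^2$ as in \eqref{eq:5_26} and \eqref{eq:5_27}, while the radius of the ball on which $F_k$ is shown to be onto is bounded below linearly, $\gamma\ge C_1\rho$ as in \eqref{eq:5_34}; it is exactly this mismatch that guarantees, for $\rho_0$ small, that correction parameters $(s_k,t_k)$ exist with energy cost $O(\rho)$, a term the hole-filling iteration can absorb -- without making this explicit, the existence and smallness of the correction are not established.
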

\begin{proof}
 We start by applying the graphical decomposition lemma of Simon (see \cite[Lemma 2.1]{Simon}) to the minimising sequence $f_k:\Sigma\rightarrow \R^3$.
 This lemma is also applicable for immersions by an argument of Schätzle 
 (see \cite[p. 280]{Schaetzle} and cf. \cite[Lemma A.6]{EichmannHelfrichBoundary}, in the beginning we work as in these papers).
 We repeat some steps of \cite[(2.11)-(2.16)]{Schaetzle}, which we will need later (see also \cite[(3.5)-(3.14)]{EichmannHelfrichBoundary}):\\
 The upper-semicontinuity of the weak convergence $|A_k|^2\mu_k\rightarrow \nu$ for Radon measures (see e.g. \cite[Prop. 4.26]{Maggi}) yields for every Ball satisfying \eqref{eq:5_1}
 \begin{equation}
 \label{eq:5_4}
  \limsup_{k\rightarrow\infty}\int_{B_{\rho_{x_0}}(x_0)}|A_k|^2\, d\mu_k \leq \nu(\overline{B_{\rho_{x_0}}(x_0)}) < \varepsilon_0^2.
 \end{equation}
Hence the aforementioned graphical decomposition lemma \cite[Lemma A.6]{EichmannHelfrichBoundary} is applicable 
and we can decompose $f^{-1}_k (\overline{B_{\frac{\rho_{x_0}}{2}}(x_0)})$ for large $k$ and $\varepsilon_0>0$ small enough
into closed pairwise disjoint sets $D_{k,i}\subset \Sigma$ (which are topological discs), $i=1,\ldots, I_k$, $I_k\leq C E_{H_0,Area_0,Vol_0}$ (cf. \eqref{eq:2_6}, \eqref{eq:1_6}), i.e.
\begin{equation*}
 f^{-1}_k\left(\overline{B_{\frac{\rho_{x_0}}{2}}(x_0)}\right) = \bigcup_{i=1}^{I_k} D_{k,i}.
\end{equation*}
Furthermore for every $i=1,\ldots, I_k$ we have affine $2$-dimensional planes $L_{k,i}\subset \R^3$, 
smooth functions $u_{k,i}:\overline{\Omega}_{k,i}\subset L_{k,i}\rightarrow L_{k,i}^\perp$ 
($\Omega_{k,i}= \Omega_{k,i}^0\setminus \cup_m d_{k,i,m}$, $\Omega_{k,i}^0\subset L_{k,i}$ simply connected, $d_{k,i,m}$ closed pairwise disjoint discs), satisfying
\begin{equation}
 \label{eq:5_5}
 \rho_{x_0}^{-1} |u_{k,i}| + |\nabla u_{k,i}| \leq C(E_{H_0,Area_0,Vol_0}) \varepsilon_0^{\frac{1}{22}}
\end{equation}
and pairwise disjoint topological discs $P_{k,i,1},\ldots, P_{k,i,J_k}\subset D_{k,i}$, such that
\begin{equation}
\label{eq:5_6}
 f_{k}\left(D_{k,i}-\bigcup_{j=1}^{J_{k,i}}P_{k,i,j}\right)=\operatorname{graph}(u_{k,i})\cap\overline{B_{\frac{\rho_{x_0}}{2}}(x_0)}
\end{equation}
and
\begin{equation}
\label{eq:5_7}
 \sum_{i=1}^{I_k}\sum_{j=1}^{J_k} \operatorname{diam} f_{k}(P_{k,i,j})\leq C(E_{H_0,Area_0,Vol_0})\varepsilon_0^{\frac{1}{2}}\rho_{x_0}.
\end{equation}
The arguments \cite[Eq. (2.12)-(2.14)]{Schaetzle} yield for a chosen $0<\tau<\frac{1}{2}$ an $\varepsilon_0$ small enough and $0<\theta<\frac{1}{4}$ such that
\begin{equation}
 \label{eq:5_8}
 \frac{\mu_{g_{k}}(D_{k,i}\cap f_{k}^{-1}(B_{\sigma}(x)))}{w_2\sigma^2}<1+\tau
\end{equation}
for $B_\sigma(x)\subset B_{\theta\rho_{x_0}}(x_0)$ arbitrary. 
Here $w_2$ denotes the Hausdorff measure of the $2$-dimensional euclidean unit ball and $\mu_{g_{k}}$ the area measure on $\Sigma$ induced by $f_{k}$. 
As in \cite[p. 281]{Schaetzle} rsp. \cite[Eq. (3.8)]{EichmannHelfrichBoundary} we define Radon measures on $\R^3$, which will lead to a decomposition of $\mu$ by Radon measures of Hausdorff densitity one:
\begin{align}
 \begin{split}
 \label{eq:5_9}
 \mu_{k,i}&:=\mathcal{H}^2\lfloor f_{k}(D_{k,i}\cap f_{k}^{-1}(B_{\theta\rho_{x_0}}(x_0)))\\ &= f_{k}\left(\mu_{g_{k}}\lfloor (D_{k,i}\cap f_{k}^{-1}(B_{\theta\rho_{x_0}}(x_0)))\right).
\end{split}
 \end{align}
Since the $\mu_{k,i}$ are integer rectifiable, they are of density one, by \eqref{eq:5_8}. 
Furthermore we have as in \cite[p. 281]{Schaetzle} rsp. \cite[Eq. (3.9)]{EichmannHelfrichBoundary}
\begin{equation}
\label{eq:5_10}
 \sum_{i=1}^{I_k}\mu_{k,i} = \mu_{k}\lfloor B_{\theta\rho_{x_0}}(x_0).
\end{equation}
After taking a subsequence depending on $x_0$, $\theta\rho_{x_0}$ and relabeling we can assume $I_k=I$ and 
\begin{equation}
 \label{eq:5_11}
 \mu_{k,i}\rightarrow \mu_i\mbox{ weakly as varifolds in }B_{\theta\rho_{x_0}}(x_0).
\end{equation}
As shown in \cite[Eq. (3.13)]{EichmannHelfrichBoundary} (see also \cite[p. 281]{Schaetzle}) we get
\begin{equation}
 \label{eq:5_12}
 \mu\lfloor B_{\theta\rho_{x_0}}(x_0) = \sum_{i=1}^I\mu_i.
\end{equation}
As in \cite[Eq. (2.16)]{Schaetzle} we can pass to the limit in \eqref{eq:5_8} and obtain
\begin{equation}
 \label{eq:5_13}
 \frac{\mu_i(B_\sigma(x))}{w_2\sigma^2}\leq 1+\tau\quad \forall\ \overline{B_\sigma(x)}\subset B_{\theta\rho_{x_0}}(x_0). 
\end{equation}
We will apply Allard's regularity theorem \ref{A_1} to the $\mu_i$. 
\eqref{eq:5_13} already takes care of the needed density estimate \eqref{eq:A_3}. 
The remainder of the proof will be about showing \eqref{eq:A_2} or a similar $L^p$-bound for the mean curvature.
Here we need to make a distinction of cases given by Lemma \ref{4_2}.\\
First we assume $H_{\mu}\in L^\infty$:\\
In this case we will show, that $H_{\mu_i}\in L^p$ , $p> 2$ arbitrary.
By the usual regularity Theorem of Allard (see e.g. \cite[Thm. 24.2]{Simon_Buch}) the $\mu_i$ will be $C^{1,\beta}$ graphs.

We need the definition of the tilt and height excess, which we repeat here (see e.g. \cite[Eqs. (1.1),(1.2)]{SchaetzleQuadraticTilt}):
\begin{equation}
 \label{eq:5_14}
 \operatorname{tiltex}_\mu(x,\omega,T) := \omega^{-2}\int_{B_\omega(x)}\|T_\xi\mu - T\|^2\, d\mu(\xi),
\end{equation}
\begin{equation}
 \label{eq:5_15}
 \operatorname{heightex}_\mu(x,\omega,T):= \omega^{-4} \int_{B_\omega(x)}\operatorname{dist}(\xi-x,T)^2\, d\mu(\xi).
\end{equation}
Here $x\in\R^3$, $\omega\in\R$ and $T\subset\R^3$ is a two dimensional subspace (cf. \cite[§38]{Simon_Buch} for defining a norm on subspaces, i.e. the unoriented Grassmannian).
Since $H_\mu\in L^\infty(\mu)$, \cite[Thm. 5.1]{SchaetzleQuadraticTilt} yields
\begin{equation}
 \label{eq:5_16}
 \operatorname{tiltex}_\mu(x,\omega,T), \operatorname{heightex}_\mu(x,\omega,T)\leq O_x(\omega^2).
\end{equation}
By the defintion of the height and tilt excess we obtain for every $i=1,\ldots,I$
\begin{equation*}
  \operatorname{tiltex}_{\mu_i}(x,\omega,T)\leq  \operatorname{tiltex}_\mu(x,\omega,T)\leq O_x(\omega^2)
\end{equation*}
and
\begin{equation*}
  \operatorname{heightex}_{\mu_i}(x,\omega,T)\leq  \operatorname{heightex}_\mu(x,\omega,T)\leq O_x(\omega^2),
\end{equation*}
because $\mu_i\leq \mu$. 
Furthermore by \cite[Thm. 3.1]{SchaetzleCurrentLower} $\mu_i$ and $\mu$ are $C^2$-rectifiable.
Hence \cite[Cor. 4.3]{SchaetzleCurrentLower} is applicable and yields
\begin{equation}
 \label{eq:5_17}
 H_{\mu_i}=H_\mu,\quad \mu_i\mbox{-a.e}.
\end{equation}
Since $H_\mu\in L^\infty(\mu)$ and $\mu_i\leq \mu$ we finally obtain
\begin{equation}
 \label{eq:5_18}
 H_{\mu_i}\in L^\infty(\mu_i),
\end{equation}
which yields by the usual Allard regularity theorem (see e.g. \cite[Thm. 24.2]{Simon_Buch}), that every $\mu_i$ is a $C^{1,\beta}$-graph with $\beta\in(0,1)$ arbitrary.

This case is therefore done. \\
Now we modify Schygulla's argument \cite[Lemma 5]{Schygulla} to our situation:
The beginning of the argument is as in \cite[Lemma 3.1]{Simon}. 
For the readers convenience and because we need the notation, we repeat these steps here (see also \cite[pp. 9-10]{EichmannHelfrichBoundary}):

Let us choose $0<\rho<\theta\rho_{x_0}$ fixated but arbitrary. 
We need to apply the graphical decomposition Lemma \cite[Lemma 2.1]{Simon} again to $f_{k}(D_{k,i})\cap \overline{B_\rho(x_0)}$. 
Hence we obtain smooth functions 
$v_{k,i,\ell}:\overline{\tilde{\Omega}_{k,i,\ell}}\subset \tilde{L}_{k,i,\ell}\rightarrow \tilde{L}_{k,i,\ell}^\perp$, $\tilde{L}_{k,i}\subset\R^3$ $2$-dimensional planes
($\ell =1,\ldots, N_{k,i}\leq C(E_{H_0,Area_0,Vol_0})$),
$\tilde{\Omega}_{k,i}=\tilde{\Omega}^0_{k,i}\setminus\cup_m \tilde{d}_{k,i,\ell,m}$, $\tilde{\Omega}^0_{k,i}$ simply connected and $\tilde{d}_{k,i,\ell,m}$ closed pairwise disjoint discs.
Furthermore we have
\begin{equation}
\label{eq:5_21}
 \rho^{-1}|v_{k,i,\ell}| + |\nabla v_{k,i,\ell}|\leq C(E_{H_0,Area_0,Vol_0})\varepsilon_0^\frac{1}{22}
\end{equation}
and closed pairwise disjoint topological discs $\tilde{P}_{k,i,\ell,1},\ldots, \tilde{P}_{k,i,\ell,J_{k,i,\ell}}\subset \tilde{D}_{k,i,\ell}$
($\tilde{D}_{k,i,\ell}$ is a topological disc as well) such that for all $\ell$
\begin{equation}
\label{eq:5_22}
 f_{k}\left(\tilde{D}_{k,i,\ell}-\bigcup_{j=1}^{J_{k,i,\ell}}\tilde{P}_{k,i,\ell,j}\right)\cap\overline{B_{\rho}(x_0)}=\operatorname{graph}(v_{k,i,\ell})\cap\overline{B_{\rho}(x_0)}
\end{equation}
These $\tilde{P}_{k,i,\ell,j}$ also satisfy the following estimate
\begin{equation}
 \label{eq:5_23}
 \sum_{j=1}^{J_{k,i,\ell}} \operatorname{diam} f_{k}(\tilde{P}_{k,i,\ell,j})\leq C(E_{H_0,Area_0,Vol_0})\varepsilon_0^\frac{1}{2}\rho \leq {\frac{1}{8}}\rho,
\end{equation}
if we choose $C(E_{H_0,Area_0,Vol_0})\varepsilon_0^\frac{1}{2}<\frac{1}{8}$ (The need for \eqref{eq:5_23} is also the reason for applying the graphical decomposition a second time).
Let us also introduce the corresponding Radon measures similar to \eqref{eq:5_9}
\begin{equation}
\label{eq:5_24}
 \tilde{\mu}_{k,i,\ell}:=\mathcal{H}^2\lfloor f_{k}(\tilde{D}_{k,i,\ell}) = \mu_{k,i}\lfloor f_{k}(\tilde{D}_{k,i,\ell}).
\end{equation}
Since $f_{k}|_{D_{k,i}\cap f^{-1}_{k}(B_{\theta\rho_{x_0}}(x_0))}$ is an embedding, we also have
\begin{equation}
\label{eq:5_25}
 \sum_{\ell=1}^{N_{k,i}} \tilde{\mu}_{k,i,\ell} = \mu_{k,i}\lfloor B_\rho(x_0).
\end{equation}
Let us define 
\begin{equation}
 \label{eq:5_20}
 C_\sigma^{k,i,\ell}(x_0):=\left\{x+y:\ x\in B_\sigma(x_0)\cap \tilde{L}_{k,i,\ell},\ y\in \tilde{L}_{k,i,\ell}^\perp\right\}.
\end{equation}
Inequality \eqref{eq:5_23} yields $\mathcal{L}^1$-measurable sets $S_k\subset (\frac{1}{2}\rho,\frac{3}{4}\rho)$, such that $\forall j=1,\ldots,J_{k,i,\ell}$ 
\begin{equation*}
 \mathcal{L}^1(S_k)\geq \frac{1}{8}\rho\mbox{ and }\forall \sigma\in S_k:\ \partial C_\sigma^{k,i,\ell}(x_0)\cap f(\tilde{P}_{k,i,\ell,j})=\emptyset.
\end{equation*}
Therefore $v_{k,i,\ell}|_{\partial B_\sigma(x_0)\cap \tilde{L}_{k,i,\ell}}$ and $\nabla v_{k,i,\ell}|_{\partial B_\sigma(x_0)\cap \tilde{L}_{k,i,\ell}}$ are well defined for any $\sigma\in S_k$. 
Hence Lemma \ref{A_3} is applicable and yields a function $w_{k,i,\ell}:B_\sigma(x_0)\cap \tilde{L}_{k,i,\ell}\rightarrow \tilde{L}_{k,i,\ell}^\perp$
with Dirichlet boundary data given by $v_{k,i,\ell}$ and $\nabla v_{k,i,\ell}$. \\ 
This defines a sequence of immersions $f^{graph}_{k,\sigma}:\Sigma_{k,\sigma}\rightarrow\R^3$ by
\begin{equation*}
 \Sigma_{k,\sigma}:=\left(\Sigma\setminus \left(f^{-1}_k(B_\sigma(x_0))\cap \tilde{D}_{k,i}\right)\right)\oplus \left(B_\sigma(x_0)\cap \tilde{L}_{m,i}\right)
\end{equation*}
and
\begin{equation*}
 f^{graph}_{k,\sigma}(p) = \left\{\begin{array}{cc}
                                   f_k(p),& p\in \Sigma\setminus \left(f^{-1}_k(B_\sigma(x_0))\cap \tilde{D}_{k,i}\right)\\
                                   w_{k,i,\ell}(p),& p\in \left(B_\sigma(x_0)\cap \tilde{L}_{m,i}\right)
                                  \end{array}\right.
\end{equation*}
Since $\tilde{D}_{k,i}$ is topologically a disc, $\Sigma$ and $\Sigma_{k,\sigma}$ are topologically equivalent.
$f^{graph}_{k,\sigma}$ does not have the same area or enclosed volume as $f_k$, which we like to correct now.
Therefore we need some estimates on these properties. We start with the area. Here we use \eqref{eq:5_8} and Lemma \ref{A_3} to obtain
\begin{align}
\label{eq:5_26}
\begin{split}
|\operatorname{Area}(f_k)-\operatorname{Area}(f^{graph}_{k,\sigma})|\leq &2\max\{\operatorname{Area}(f_k|_{\tilde{D}_{k,i}}), \operatorname{Area}(w_{k,i,\ell})\}\\
\leq & 2\max\left\{\tilde{\mu}_{k,i,\ell}(B_\sigma(x_0)),C\int_{B_\sigma(x_0)\cap \tilde{L}_{k,i,\ell}}\, d\mathcal{H}^2\right\}\\
\leq & C\sigma^2\leq C\rho^2.
\end{split}
\end{align}
Let us proceed with the enclosed volume. Since $f_k(\Sigma)\subset B_N(0)$, for $N>0$ big enough and independent of $k$, we also get
$f^{graph}_{k,\sigma}(\Sigma_{k,\sigma})\subset B_N(0)$. 
Hence the definition of the enclosed volume and the Cauchy-Schwartz inequality yield
\begin{align}
\label{eq:5_27}
\begin{split}
 |\operatorname{Vol}(f_k) - \operatorname{Vol}(f^{graph}_{k,\sigma})| \leq & 2\max\{|\operatorname{Vol}(f_k|_{\tilde{D}_{k,i}})|, |\operatorname{Vol}(w_{k,i,\ell})|\}\\
 \leq & C(N)\max\{\operatorname{Area}(f_k|_{\tilde{D}_{k,i}}), \operatorname{Area}(w_{k,i,\ell})\}\\
 \leq & C\sigma^2\leq C\rho^2.
 \end{split}
\end{align}
We now assume we have vectorfields given as in Lemma \ref{4_2}.
Hence we find an $r>0$ and points $\eta_X,\eta_Y\in \operatorname{spt}(\mu)\setminus B_{r}(x_0)$.
Without loss of generality we assume $\rho_0,\rho_{x_0}\leq \frac{r}{2}$.
Furthermore for every $\varepsilon_X,\varepsilon_Y>0$ we find
vectorfields $X\in C^\infty_0(B_{\varepsilon_X}(\eta_X))$ and $Y\in C^\infty_0(B_{\varepsilon_Y}(\eta_Y))$ such that
 \begin{align}
 \begin{split}
  \label{eq:5_19}
  &\int \operatorname{div}_{T_x\mu} X(x)\, d\mu(x) \int \langle Y,\sigma_R\rangle\, d|\triangledown\theta_R|\\
  -& \int \operatorname{div}_{T_x\mu} Y(x)\, d\mu(x) \int \langle X,\sigma_R\rangle\, d|\triangledown\theta_R|\neq0. 
  \end{split}
 \end{align}
 Here $R$ denotes the $3$-current defined in the beginning of section \ref{sec:4} and $\theta_R$ the corresponding BV-function.
 We also fixate $\varepsilon_X=\varepsilon_Y=\rho$.
Furthermore let $\Phi$ be defined as in \eqref{eq:4_1_1} and $F$ and $F_k$ be as in \eqref{eq:4_2} but with respect to $f^{graph}_{k,\sigma}$ instead.
The results of section \ref{sec:4} are still valid, since the diffeomorphism $\Phi$ does not influence the graphical comparison function (cf. Figure \ref{fig_1}).
As in the beginning of section \ref{sec:4} we obtain
\begin{equation*}
 F_k(s,t)\rightarrow F(s,t).
\end{equation*}
The oriented varifold convergence and the $L^1$ convergence of $\theta_{R_k}$ yield
\begin{equation}
 \label{eq:5_28}
 |\det DF_k(0,0)|\rightarrow |\det DF(0,0)|\geq c_0>0
\end{equation}
for a fixated constant $c_0>0$. Hence for $k$ big enough we get
\begin{equation}
 \label{eq:5_29}
 |\det DF_k(0,0)|\geq \frac{c_0}{2}.
\end{equation}
Hence $DF_k(0,0)$ is invertible.
Furthermore the mean value theorem and Lemma \ref{4_4} yield for every $T_0>0$ a $C=C(T_0)>0$, such that for every $(s,t),(s',t')\in \overline{B_T(0)}$ we have
\begin{equation}
 \label{eq:5_30}
 |DF_k(s,t) - DF_k(s',t')|\leq C \|D^2F_k\|_{L^\infty(\overline{B_T(0)})}|(s,t)-(s',t')|\leq CT
\end{equation}
if we choose $0<T<T_0$ arbitrary.
By the formula for the inverse matrix via the adjunct matrix we obtain for $k$ big enough
\begin{equation}
 \label{eq:5_31}
 |(DF_k(0,0))^{-1}|\leq C
\end{equation}
and the constant is independent of $k$.
Next we will apply Lemma \ref{A_2}. Hence we need to define a functions $\tilde{F}$ and $\tilde{F}_k$ satisfying the assumptions of that Lemma:
\begin{equation}
 \label{eq:5_32}
 \begin{array}{ccc}
 \tilde{F}(s,t)&:=& (DF(0,0))^{-1} \left(F(s,t) - F(0,0)\right)\\
 \tilde{F}_k(s,t)&:=& (DF_k(0,0))^{-1} \left(F_k(s,t) - F_k(0,0)\right)
 \end{array}
\end{equation}
Here ${}^{-1}$ is meant as the matrix inverse.
Hence
\begin{equation*}
\begin{array}{cc}
 \tilde{F}(0,0)=0,\quad & D\tilde{F}(0,0)=I\in\R^{2\times2}\\
 \tilde{F}_k(0,0)=0,\quad & D\tilde{F}_k(0,0)=I\in\R^{2\times2}\end{array}
\end{equation*}
Let furthermore $(s,t),(s',t')\in B_T(0)$ for $0<T<T_0$. Then by \eqref{eq:5_30} and \eqref{eq:5_31} we have
\begin{align*}
 |D\tilde{F}_k(s,t) - D\tilde{F}_k(s',t')| \leq& |(DF_k(0,0))^{-1}|\cdot|DF_k(s,t)-DF_k(s',t')|\\
 \leq& CT<CT_0=:\delta_0 < 1,
\end{align*}
if we choose $T_0$ small enough. So Lemma \ref{A_2} is applicable to $\tilde{F}_k$
and therefore we find for every $(\tilde{y},\tilde{z})\in B_{(1-\delta_0)T}(0)$ parameters $(s_k,t_k)$ with
\begin{equation*}
 \tilde{F}_k(s_k,t_k) = (\tilde{y},\tilde{z}).
\end{equation*}
By \eqref{eq:5_32} we obtain
\begin{equation*}
 F_k(s_k,t_k)= F_k(0,0) + DF_k(0,0)(\tilde{y},\tilde{z})=:(y,z). 
\end{equation*}
Since $DF_k(0,0)$ is invertible, we obtain a $\gamma>0$ (by \eqref{eq:5_29} only dependend on $T_0$), 
such that for every $(y,z)\in B_\gamma(F_k(0,0))$ we find $(s_k,t_k)\in B_{(1-\delta_0)T}(0)$ satisfying
\begin{equation*}
 F_k(s_k,t_k)=(y,z).
\end{equation*}
Furthermore we may choose $\gamma>0$ to be maximal, i.e. satisfying the following property: 
There is a $(y_0,z_0)\in \partial B_\gamma(0)$ and a $(\tilde{y}_0,\tilde{z}_0)\in \partial B_{(1-\delta_0)T}(0)$ with
\begin{equation*}
  DF_k(0,0)(\tilde{y}_0,\tilde{z}_0) = (y_0,z_0).
\end{equation*}
This yields
\begin{equation*}
 (1-\delta_0)T = |(\tilde{y}_0,\tilde{z}_0)| = |(DF_k(0,0))^{-1}(y_0,z_0)|\leq |DF_k(0,0)^{-1}||(y_0,z_0)|\leq C\gamma.
\end{equation*}
Hence
\begin{equation}
 \label{eq:5_33}
 C(1-\delta_0)T\leq \gamma.
\end{equation}
Now we choose $T_0:=\rho_0$, $T:=\rho$ (by choosing $\rho_0$ small enough our results are still true). 
For later purposes we also state that the inverse inequality of \eqref{eq:5_34} is true as well, only with a bigger constant of course, i.e. we have with $C_1< C_2$ independent of $k$
\begin{equation}
 \label{eq:5_34}
 C_1 \rho \leq \gamma \leq C_2\rho.
\end{equation}
Hence $\gamma$ can at most decay linearly in $\rho$, while \eqref{eq:5_26} and \eqref{eq:5_27} show a quadratic error in $\rho$ for the area and volume.
By choosing $\rho_0$ small enough, we therefore obtain for $k$ big enough parameters $(s_k,t_k)\in B_\gamma(F_k(0,0))$ with
\begin{equation*}
 F_k(s_k,t_k) = (Area_0,Vol_0).
\end{equation*}
Let $V_k^{graph}$ be the oriented varifold induced by $f^{graph}_{k,\sigma}$ and let us call
$V_k^{s,t}:= \Phi(s,t,\cdot)_\sharp V_k^{graph}$. The corresponding mass is called $\mu_k^{s,t}$. 
We denote with $A_k^{s,t}$ the second fundamentalform and with $H_k^{s,t}$ the mean curvature vector of $V_k^{s,t}$.
The orientation is called $\xi_k^{s,t}$.


Since we cannot replace $v_{k,i,\ell}$ by $w_{k,i,\ell}$ and still have a minimising sequence, we will have to correct the resulting error by the diffeomorphism $\Phi$.
By the mean value theorem and Lemma \ref{4_5} the $L^2$-Norm of the second fundamental form of the $\Phi$-corrected varifolds are controlled:
\begin{equation}
 \label{eq:5_35}
 \left|\int |A_k^{s,t}|\, d\mu_k^{s,t} - \int |A_k^{0,0}|\, d\mu_k^{s,t}\right| \leq C|(s,t)| \leq C\rho.
\end{equation}
Since $F_k$ is continuously differentiable and the derivative is bounded independently of $k$ (see the proof of Lemma \ref{4_4}), the area is controlled as well:
\begin{equation}
 \label{eq:5_36}
 \left|\mu_k^{s,t}(\R^3) - \mu_k^{0,0}(\R^3)\right| = |F_k(s,t) - F_k(0,0)|\leq C|(s,t)|\leq C\rho. 
\end{equation}
The Helfrich energy is controlled by Remark \ref{4_6} and again the mean value theorem
\begin{align}
 \label{eq:5_37}
 \begin{split}
 &\bigg|\int_{\supp(\Phi)\times G^0(2,3)}|H_k^{s,t}(x) - H_0(*\xi)|^2\,dV_k^{s,t}(x,\xi)\\
 -& \int_{\supp(\Phi)\times G^0(2,3)}|H_k^{0,0}(x) - H_0(*\xi)|^2\,dV_k^{0,0}(x,\xi)\bigg|\leq C\rho.
\end{split}
 \end{align}
Let us denote with $A^w_{k,i,\ell}$ the second fundamental form, $H^w_{k,i,\ell}$ the mean curvature vector, 
$\xi_{w_{k,i,\ell}}$ the orientation and with $K^w_{k,i,\ell}$ the Gauss curvature of $\operatorname{graph}(w_{k,i,\ell})$.
By the Gauss-Bonnet Theorem $\int_\Sigma K_k\, d\mu_{g_k}$ is given entirely by the topology of $\Sigma$. 
Hence $f_{k}$ is also a minimising sequence for $f\mapsto W_{H_0,\lambda}(f)+\kappa \int_\Sigma K_f\, d\mu_g$, $\kappa\in\R$ arbitrary under prescribed area and enclosed volume. 
Here $K_f$ denotes the Gauss curvature of a given immersion $f:\Sigma\rightarrow\R^3$. 
By \cite[Eq. (11)]{DeckGruRoe} we have $|A_k|^2 = |H_k|^2 - 2K_k$ and $K_k$ is the Gauss curvature of $f_k$.
The following calculation is an adaptation to our situation from \cite[pp. 10-11]{EichmannHelfrichBoundary}:
{\allowdisplaybreaks
\begin{align*}
 &\int_{B_{\sigma}(x_0)}|A_k|^2\,d\tilde{\mu}_{k,i,\ell} \\
 =&  \int_{B_{\sigma}(x_0)}|H_k|^2\,d\tilde{\mu}_{k,i,\ell} - 2 \int_{B_{\sigma}(x_0)}K_k\,d\tilde{\mu}_{k,i,\ell}\\
 \overset{\eqref{eq:2_6_1}}{\leq}& C \bigg(\int_{B_{\sigma}(x_0)}|H_k-H_0(*\xi_{f_{k}})|^2\,d\tilde{\mu}_{k,i,\ell}+ \tilde{\mu}_{k,i,\ell}(B_\sigma(x_0)) \bigg)
 \\&-  2\int_{B_{\sigma}(x_0)}K_k\,d\tilde{\mu}_{k,i,\ell}\\
 \leq& C\left(\int_{B_{\sigma}(x_0)}|H_k-H_0(*\xi_{f_{k}})|^2\,d\tilde{\mu}_{k,i,\ell} -  \frac{2}{C}\int_{B_\sigma(x_0)}K_k\,d\tilde{\mu}_{k,i,\ell}\right)\\
 & + C\tilde{\mu}_{k,i,\ell}(B_\sigma(x_0)) \\
 \leq& C\bigg(\int_{\operatorname{graph}(w_{k,i,\ell})}|H^w_{k,i,\ell}-H_0(*\xi_{w_{k,i,\ell}})|^2\, d\mathcal{H}^2- \frac{2}{C}\int_{\operatorname{graph}(w_{k,i,\ell})}K^w_{k,i,\ell}\,d\mathcal{H}^2 \bigg)\\
 &+ \bigg|\int_{\supp(\Phi)\times G^0(2,3)}|H^{s_k,t_k}_{k}-H_0(*\xi)|^2\, dV_k^{s_k,t_k} - \frac{2}{C}\int_{\supp(\Phi)}K^{s_k,t_k}_k\,d\mu_k^{s_k,t_k}\\
 & - \int_{\supp(\Phi)\times G^0(2,3)}|H^{0,0}_{k}-H_0(*\xi)|^2\, dV_k^{0,0} + \frac{2}{C}\int_{\supp(\Phi)}K^{0,0}_k\,d\mu_k^{0,0}\bigg|\\
 &+ C\tilde{\mu}_{k,i,\ell}(B_\rho(x_0))  +\varepsilon_k\\
 \leq& C\bigg(\int_{\operatorname{graph}(w_{k,i,\ell})}|H^w_{k,i,\ell}|^2\, d\mathcal{H}^2 + \mathcal{H}^2(\operatorname{graph}(w_{k,i,\ell}))\\
 & + \frac{2}{C}\int_{\operatorname{graph}(w_{k,i,\ell})}K^w_{k,i,\ell}\,d\mathcal{H}^2\bigg)+ C\tilde{\mu}_{k,i,\ell}(B_\rho(x_0)) + C\rho +\varepsilon_k\\
 \overset{\ref{A_3}}{\leq}& C\left(\int_{B_\sigma(x_0)\cap \tilde{L}_{k,i,\ell}}|D^2w_{k,i,\ell}|^2\, dx  +\sigma\right) + C\tilde{\mu}_{k,i,\ell}(B_\rho(x_0)) + C\rho +\varepsilon_k\\
 \overset{\ref{A_3},\eqref{eq:5_8}}{\leq}& C\sigma \int_{\graph\left(v_{k,i,\ell}|_{\partial B_\sigma(x_0)\cap \tilde{L}_{k,i,\ell}}\right)}|A_{m,i}|^2d\mathcal{H}^1 + C\sigma + C\rho+\varepsilon_k
\end{align*}}
Here $\varepsilon_k\rightarrow 0$ for $k\rightarrow \infty$.
The estimates connecting $D^2w_{k,i,\ell}$ with the corresponding curvatures can be seen by e.g. \cite[Subsection 2.1]{DeckGruRoe} and the bound on the gradient of $w_{k,i,\ell}$.
Integrating over $S_\rho$ together with the Co-Area formula (see e.g. \cite[Eq. (10.6)]{Simon_Buch}) yields
\begin{equation*}
 \int_{B_\frac{\rho}{2}(x_0)}|A_k|^2\, d\tilde{\mu}_{k,i,\ell} \leq C\int_{B_{\frac{3}{4}\rho}(x_0)\setminus B_\frac{\rho}{2}(x_0)}|A_k|^2\, d\tilde{\mu}_{k,i,\ell} + C\rho + \varepsilon_k.
\end{equation*}
Summing over $\ell$ yields with the help of \eqref{eq:5_24}
\begin{equation*}
 \int_{B_\frac{\rho}{2}(x_0)}|A_k|^2\, d\mu_{k,i} \leq C\int_{B_{\frac{3}{4}\rho}(x_0)\setminus B_\frac{\rho}{2}(x_0)}|A_k|^2\, d\mu_{k,i} + C\rho + \varepsilon_k.
\end{equation*}
Hole filling yields a $0<\Theta<1$ independent of $k$ satisfying
\begin{equation*}
 \int_{B_\frac{\rho}{2}(x_0)}|A_k|^2\,d\mu_{k,i} \leq \Theta \int_{B_{\frac{3}{4}\rho}(x_0)}|A_k|^2\, d\mu_{k,i} + C\rho + \varepsilon_k. 
\end{equation*}
The semi-continuity properties of $\nu$ (see e.g. \cite[Prop. 4.26]{Maggi}) for measure convergence yield for $k\rightarrow\infty$
\begin{equation}
\label{eq:5_38}
 \nu_i(B_{\frac{\rho}{2}}(x_0)) \leq \Theta \nu_i(B_\rho(x_0)) + C\rho.
\end{equation}
Here $|A_k|^2\mu_{k,i}\rightarrow \nu_i$ for $k\rightarrow\infty$ as Radon measures.
By $\mu_{k,i}\leq\mu_{k}$ we also get $\nu_i\leq \nu$.
Since we only needed the estimate
\begin{equation*}
 \nu(\overline{B_{\rho_{x_0}}(x_0))} \leq \varepsilon_0^2
\end{equation*}
to obtain \eqref{eq:5_38}, we can repeat the argument for every $B_\rho(x)\subset B_{\frac{\rho_{x_0}}{4}}(x_0)$.
This yields for these balls
\begin{equation*}
 \nu_i(B_{\frac{\rho}{2}}(x_0)) \leq \Theta \nu_i(B_\rho(x)) + C\rho.
\end{equation*}
E.g. \cite[Lemma 8.23]{GilbargTrudinger} gives us
\begin{equation}
 \label{eq:5_39}
 \nu_i(B_\rho(x)) \leq C\left(\frac{\rho}{\rho_{x_0}}\right)^\beta\nu_i(B_\frac{\rho_{x_0}}{4}(x)) + C\sqrt{\rho \rho_{x_0}}.
\end{equation}
Here $C=C(\Theta)>0$ and $\beta=\beta(\Theta)>0$ are constants.
Since $B_\frac{\rho_{x_0}}{4}(x)\subset B_{\rho_{x_0}}(x_0)$ we also get
\begin{equation}
 \label{eq:5_40}
 \nu_i(B_\rho(x)) \leq C\left(\frac{\rho}{\rho_{x_0}}\right)^\beta\nu_i(B_{\rho_{x_0}}(x_0)) + C\sqrt{\rho \rho_{x_0}}.
\end{equation}
Since $\frac{\rho}{\rho_{x_0}}< 1$ we can choose $\beta < \frac{1}{2}$ and obtain
\begin{align*}
 \nu_i(B_\rho(x))\leq& C\rho^\beta\rho_{x_0}^{-\beta}\left(\nu_i(B_{\rho_{x_0}}(x_0)) + \rho^{\frac{1}{2}-\beta} \rho_{x_0}^{\frac{1}{2}+\beta}\right)\\
 \leq & C\rho^\beta\rho_{x_0}^{-\beta}\left(\nu(\overline{B_{\rho_{0}}(x_0)}) + \rho_0\right)\\
 \leq & C\rho^\beta\rho_{x_0}^{-\beta}\left(\varepsilon_0^2 + \rho_0\right).
\end{align*}
By $|A_{\mu_i}|^2\mu_i \leq \nu_i$ we therefore get
\begin{equation}
 \label{eq:5_41}
 \int_{B_\rho(x)}|A_{\mu_{i}}|^2\, d\mu_i \leq C\rho^\beta\rho_{x_0}^{-\beta}\left(\varepsilon_0^2 + \rho_0\right).
\end{equation}
Choosing $C\left(\varepsilon_0 + \rho_0\right)$ small enough, Allard's regularity theorem \ref{A_1} (cf. \eqref{eq:A_2}) yields $\mu_i$ to be a $C^{1,\beta}$ graph.
By \eqref{eq:5_12} $\mu$ is a union of $C^{1,\beta}$ graphs in a neighbourhood of $x_0$, which all satisfy estimates in the form of \eqref{eq:A_5}.
This finishes the proof.
\end{proof}

\begin{remark}
 \label{5_2}
 The proof of Lemma \ref{5_1} does not work if we would minimise in the class of embeddings, 
 since the lack of a Li-Yau-type inequality prevents us from showing that the $f^{graph}_{k,\sigma}$ are still embeddings.
 This is a key problem, because we cannot compare the Helfrich energy of $f^{graph}_{k,\sigma}$ to $f_k$ without it.
\end{remark}

Next we formulate the lower-semicontinuity property of the minimising sequence:
\begin{lemma}
 \label{5_3}
 The minimising sequence $V^0_k$ for the Helfrich problem \eqref{eq:1_6} satisfies
 \begin{equation*}
  W_{H_0}(V^0)\leq \liminf_{k\rightarrow\infty} W_{H_0}(V^0_k).
 \end{equation*}
\begin{proof}
 Since $\mu$ is locally a graph of $C^{1,\beta}\cap W^{2,2}$ graphs outside of finitely many points (see Lemma \ref{5_1}),
 and these graphs are approximated by $\mu_{k,i}$, see \eqref{eq:5_11},
 the proof of the lower-semicontinuity estimate is the same as in \cite[Lemma 4.1]{EichmannHelfrichBoundary}.
\end{proof}

\end{lemma}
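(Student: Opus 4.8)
The plan is to use the partial regularity of Lemma \ref{5_1} to localise the estimate to the graphical pieces, where the Helfrich integrand is lower-semicontinuous under the available weak convergence, and to absorb the finitely many bad points by a covering argument. By Lemma \ref{5_1} there are only finitely many bad points $x_1,\dots,x_P$, and away from them $\operatorname{spt}(\mu)$ is covered, by compactness (Lemma \ref{2_1}), by finitely many balls $B_{\theta\rho_j}(y_j)$ in each of which $\mu$ decomposes as a finite union $\mu\lfloor B_{\theta\rho_j}(y_j)=\sum_i\mu_i$ of $C^{1,\beta}\cap W^{2,2}$ graphs $\mu_i$, each being the weak limit of pieces $\mu_{k,i}$ of the minimising sequence (see \eqref{eq:5_11} and \eqref{eq:5_12}). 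For a fixed $\delta>0$ I would choose a partition of unity $\{\phi_j\}_{j\ge 0}$ with $\sum_{j\ge0}\phi_j=1$ on $\operatorname{spt}(\mu)$, where $\phi_0$ is supported in $\bigcup_p B_\delta(x_p)$ and each $\phi_j$ with $j\ge1$ is supported in one decomposition ball, and split the Helfrich energy \eqref{eq:2_3} into weighted local contributions.

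Next I would establish the local convergence on each graph piece. Writing each $\mu_{k,i}$ as the graph of a function $u_{k,i}$ over a $2$-plane with small gradient, the uniform bound $\int|A_k|^2\,d\mu_k\le C$ together with \eqref{eq:5_5} gives a uniform $W^{2,2}$ bound on the $u_{k,i}$, hence, after passing to a subsequence, weak convergence $u_{k,i}\rightharpoonup u_i$ in $W^{2,2}$, where $u_i$ represents the limit graph $\mu_i$. Two consequences are crucial. On the one hand, the compact embedding $W^{2,2}\hookrightarrow\hookrightarrow W^{1,q}$ in two dimensions yields strong convergence $\nabla u_{k,i}\to\nabla u_i$ in every $L^q$, $q<\infty$; since the orientation $*\xi_k$ is a bounded continuous function of $\nabla u_{k,i}$, this gives the strong convergence $H_0(*\xi_k)\to H_0(*\xi)$ in $L^2$. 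On the other hand, the weak convergence $\nabla^2u_{k,i}\rightharpoonup\nabla^2u_i$ in $L^2$, combined with the uniform convergence of the metric coefficients (which depend continuously on $\nabla u_{k,i}$), yields the weak convergence of the mean curvature vectors $H_k\rightharpoonup H_{\mu_i}$ in $L^2$ on each piece. Transporting all integrals to the fixed reference plane via the area formula, the densities $d\mu_{k,i}$ converge uniformly, so these weak/strong convergences are genuinely with respect to a common measure.

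Then I would close the estimate. Since $H_k-H_0(*\xi_k)\rightharpoonup H_{\mu_i}-H_0(*\xi)$ weakly in $L^2$ (a weakly convergent term minus a strongly convergent one), the weak lower-semicontinuity of the $L^2$-norm gives, piece by piece and with the nonnegative continuous weight $\phi_j$,
\[
\int\phi_j\,|H_{\mu_i}-H_0(*\xi)|^2\,dV^0\le\liminf_{k\to\infty}\int\phi_j\,|H_k-H_0(*\xi_k)|^2\,dV^0_k.
\]
Summing over the finitely many pieces $i$ and balls $j\ge1$, using $\sum\liminf\le\liminf\sum$ together with the nonnegativity of the integrand and $\sum_{j\ge1}\phi_j\le1$, bounds the total limit energy over the good region by $\liminf_{k\to\infty}W_{H_0}(V^0_k)$. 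Finally, letting $\delta\to0$ makes the $\phi_0$-contribution vanish (the bad points form a $V^0$-null set and $|H_{V^0}-H_0(*\xi)|^2\in L^1(V^0)$) while $\sum_{j\ge1}\phi_j\nearrow1$ recovers the full $W_{H_0}(V^0)$, which proves the claim.

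The main obstacle is the orientation. Lower-semicontinuity is false for general oriented-varifold sequences precisely because two sheets of opposite orientation may merge in the limit, letting the sign of the $H_0(*\xi)$ shift oscillate and the energy drop; this is the Gro\ss e-Brauckmann counterexample. The partial regularity of Lemma \ref{5_1} is exactly what removes this difficulty: on each $C^{1,\beta}$ graph the normal, and hence $*\xi_k$, converges strongly, so within a piece there is no orientation cancellation and the spontaneous-curvature term enters only as a strongly convergent perturbation of the weakly convergent mean curvature. The only remaining technical care is to verify that weak $W^{2,2}$ convergence of the graphs really does produce weak $L^2$ convergence of the full mean-curvature \emph{vectors}, and not merely of the scalar second fundamental form, for which the strong $L^q$ convergence of the first derivatives entering the metric is used.
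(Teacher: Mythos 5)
Your proposal is correct and takes essentially the same route as the paper: the paper's proof of Lemma \ref{5_3} simply combines the graphical structure from Lemma \ref{5_1} with the approximation \eqref{eq:5_11} and defers the rest to \cite[Lemma 4.1]{EichmannHelfrichBoundary}, which is precisely the argument you reconstruct — localisation to the $C^{1,\beta}\cap W^{2,2}$ graph pieces, weak $W^{2,2}$ convergence of the graph functions combined with strong $W^{1,q}$ convergence (hence strong convergence of the orientation term $H_0(*\xi_k)$ and weak $L^2$ convergence of the mean curvature vectors), weak lower semicontinuity of the $L^2$-norm, and a covering/partition-of-unity argument with the finitely many bad points absorbed as a $V^0$-null set. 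Your closing observation, that strong convergence of the orientation on each graph piece is exactly what rules out the cancellation in the Gro\ss e-Brauckmann counterexample, is indeed the reason the partial regularity of Lemma \ref{5_1} is the key prerequisite.
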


\appendix
\section{Auxilliary Results}
\label{sec:A}


For the readers convenience we collect a few needed results:

The following  is a variant of Allard's regularity Theorem. 
A proof of this statement can be found in \cite[Section 3]{Simon} or \cite[Korollar 20.3]{Schaetzle_Skript_Allard} (see also \cite[Theorem B.1]{Schaetzle}).
\begin{theorem}[Allard's regularity Theorem, see \cite{Allard}, Theorem 8.16]
\label{A_1}
For $n,m\in\N$, $0<\beta<1$, $\alpha>0$ there exist $\varepsilon_0=\varepsilon_0(n,m,\alpha,\beta)>0$, $\gamma=\gamma(n,m,\alpha,\beta)$ and $C=C(n,m,\alpha,\beta)$ such that:\\
Let $\mu$ be an integral $n$-varifold in $B_{\rho_0}^{n+m}(0)$, $0<\rho_0<\infty$, $0<\varepsilon<\varepsilon_0$ with locally bounded first variation in $B_{\rho_0}^{n+m}(0)$ satisfying
\begin{equation}
 \label{eq:A_1}
 \rho^{1-n}\|\delta \mu\|(B_\rho)\leq \varepsilon^2(\rho^{-n}\mu(B_{\rho}))^{1-\alpha}\rho^{2\beta}\rho_0^{-2\beta},\quad \forall B_{\rho}\subset B_{\rho_0}(0)
\end{equation}
or weak mean curvature $H_{\mu}\in L^2(\mu\lfloor B_{\rho_0}^{n+m}(0))$ satisfying
\begin{equation}
 \label{eq:A_2}
 (\rho^{2-n})\left(\int_{B_\rho}|H_\mu|^2\, d\mu\right)^{\frac{1}{2}}\leq \varepsilon(\rho^{-n}\mu(B_\rho))^{\frac{1}{2}-\alpha}\rho^\beta\rho_0^{-\beta},\quad \forall B_\rho\subset B_{\rho_0}(0)
\end{equation}
and 
\begin{equation}
 \label{eq:A_3}
 0\in\operatorname{spt}\mu,\ \rho_0^{-n}\mu(B_{\rho_0}(0))\leq(1+\varepsilon)\omega_n.
\end{equation}
Then there exists $u\in C^{1,\beta}(B_{\gamma\varepsilon\rho_0}^n(0),\R^m)$ $u(0)=0$, such that after rotation
\begin{equation}
 \label{eq:A_4}
 \mu\lfloor B_{\gamma\varepsilon\rho_0}^{n+m}(0)=\mathcal{H}^n\lfloor(\operatorname{graph} u\cap B_{\gamma\varepsilon\rho_0}^{n+m}(0))
\end{equation}
and
\begin{equation}
 \label{eq:A_5}
 (\varepsilon\rho_0)^{-1}\|u\|_{L^{\infty}(B^n_{\gamma\varepsilon\rho_0}(0))}+\|\nabla u\|_{L^{\infty}(B^n_{\gamma\varepsilon\rho_0}(0))} + (\varepsilon\rho_0)^\beta \operatorname{h\ddot{o}l}_{B_{\gamma\varepsilon\rho_0}^n(0),\beta}\nabla u \leq C\varepsilon^{\frac{1}{2(n+1)}}.
\end{equation}
\end{theorem}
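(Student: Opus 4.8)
The plan is to follow the blow-up scheme of Allard in the streamlined form of Simon \cite[Section 3]{Simon} and Schätzle \cite{Schaetzle_Skript_Allard}. I work primarily under the mean-curvature hypothesis \eqref{eq:A_2}; the first-variation hypothesis \eqref{eq:A_1} is treated in parallel, since both quantities enter the later estimates only through the same scale-invariant excess. The first step is the monotonicity formula for the mass ratio $\rho^{-n}\mu(B_\rho)$: combined with \eqref{eq:A_3} it upgrades the one-sided density bound at the origin to an upper bound $\Theta^n(\mu,x)\le 1+C\varepsilon$ at all nearby points, so that $\mu$ has multiplicity one to leading order and an approximate tangent plane exists. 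From here I introduce the tilt-excess \eqref{eq:5_14} and the height-excess \eqref{eq:5_15} and record the elementary fact that smallness of the density ratio forces smallness of both excesses at the top scale $\rho_0$.

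The heart of the proof is a Lipschitz approximation together with an excess-decay (harmonic approximation) lemma. First I would show that, off a ``bad set'' of small $\mu$-measure controlled by the tilt-excess, the support of $\mu$ is the graph of a Lipschitz function $u$ over a fixed plane $T$ with small Lipschitz constant; this is the standard multiplicity-one Lipschitz approximation obtained from the density bound by a Chebyshev and covering argument. The central assertion is then that there exist $0<\theta<1$ and an affine plane $T'$ with
\[
\operatorname{heightex}_\mu(0,\theta\rho,T')\le \tfrac{1}{2}\,\operatorname{heightex}_\mu(0,\rho,T)+C\left(\frac{\rho}{\rho_0}\right)^{2\beta}\int_{B_\rho}|H_\mu|^2\,d\mu .
\]
I would prove this by a blow-up argument: rescaling the graphs $u$ by the excess and passing to the limit, the multiplicity-one property and the smallness of the curvature force the limit to be a weakly, hence classically, harmonic function, and harmonic functions satisfy precisely this quadratic decay while the curvature term absorbs the inhomogeneity thanks to the favorable scaling $\rho^{2\beta}\rho_0^{-2\beta}$ built into \eqref{eq:A_2}. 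A reverse-Poincar\'e (Caccioppoli) inequality bounding the tilt-excess by the height-excess plus the curvature then transfers this decay from the height to the tilt.

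With the excess-decay lemma in hand, the final step is a Campanato-type iteration. Applying the decay at the origin, and by the same argument at every point of $\operatorname{spt}\mu$ near $0$ and at every dyadic scale, I obtain a geometric decay $\operatorname{tiltex}_\mu(x,\rho,T_{x,\rho})\le C(\rho/\rho_0)^{2\beta}$, which forces the approximating planes $T_{x,\rho}$ to converge at H\"older rate $\beta$ as $\rho\to 0$. This identifies $\operatorname{spt}\mu$ as the graph of a single $C^{1,\beta}$ function on a ball of radius $\sim\varepsilon\rho_0$, establishing \eqref{eq:A_4}, and the accumulated excess bounds yield the quantitative estimate \eqref{eq:A_5} with the stated power $\varepsilon^{1/(2(n+1))}$. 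The main obstacle is the excess-decay lemma: making the blow-up rigorous requires the Lipschitz approximation to converge strongly enough that the bad set disappears in the limit and that the limit genuinely solves Laplace's equation, and it is exactly here that the multiplicity-one hypothesis \eqref{eq:A_3} and the scale-adapted curvature bound are indispensable.
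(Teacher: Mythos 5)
The paper does not prove Theorem \ref{A_1} at all: it is stated as a known variant of Allard's regularity theorem, with the proof delegated to \cite[Section 3]{Simon} and \cite[Korollar 20.3]{Schaetzle_Skript_Allard} (see also \cite[Theorem B.1]{Schaetzle}). Your sketch reproduces precisely the blow-up scheme of those cited sources --- monotonicity formula plus \eqref{eq:A_3} to get multiplicity one, Lipschitz approximation, harmonic-approximation excess decay with the curvature term absorbed by the $\rho^{2\beta}\rho_0^{-2\beta}$ scaling, Caccioppoli inequality, and Campanato iteration --- so it is in essence the same argument the paper relies on.
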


In section \ref{sec:3} we need a version of the inverse function theorem with explicit estimates on the size of domain and codomain on which the function is invertible:
\begin{theorem}[See \cite{Lang}, Chapter XIV §1, Lemma 1.3]
 \label{A_2}
 Let $0\in U\subset\R^n$ be open and $f\in C^1(U,\R^n)$. Furthermore let $f(0)=0$, $Df(0)=id$. 
 Assume $r>0$ with $\overline{B_r(0)}\subset U$ and let $0<s<1$ satisfy
 \begin{equation*}
  \|Df(z)-Df(x)\|\leq s
 \end{equation*}
for all $x,z\in \overline{B_r(0)}$. Here $\|Df(z)\|=\sup_{|x|=1}|Df(z)x|$. If $y\in \R^n$ and $|y|\leq (1-s)r$, then there exists a unique $x\in \overline{B_r(0)}$, such that $f(x)=y$.
\end{theorem}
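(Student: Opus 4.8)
The plan is to prove this quantitative inverse function theorem via the Banach fixed point theorem, by recasting the equation $f(x)=y$ as a fixed point problem. First I would introduce the auxiliary map $g:\overline{B_r(0)}\rightarrow\R^n$ defined by $g(x):=x-f(x)+y$, so that $x$ is a fixed point of $g$ if and only if $f(x)=y$. Since $f(0)=0$, we record the useful identity $g(0)=y$.

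The key step is to show that $g$ is a contraction on $\overline{B_r(0)}$. I would compute $Dg(x)=\mathrm{id}-Df(x)=Df(0)-Df(x)$, using the hypothesis $Df(0)=\mathrm{id}$. Applying the tilt bound with the basepoint $z=0$ (which indeed lies in $\overline{B_r(0)}$) then yields $\|Dg(x)\|=\|Df(0)-Df(x)\|\leq s$ for every $x\in\overline{B_r(0)}$. Because $\overline{B_r(0)}$ is convex, the mean value inequality for the $C^1$ map $g$ gives $|g(x_1)-g(x_2)|\leq s\,|x_1-x_2|$ for all $x_1,x_2\in\overline{B_r(0)}$, so that $g$ is $s$-Lipschitz with $s<1$.

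Next I would verify that $g$ maps $\overline{B_r(0)}$ into itself, which is precisely the point at which the admissible range $|y|\leq(1-s)r$ is consumed. For $x\in\overline{B_r(0)}$, the contraction estimate together with $g(0)=y$ gives
\[
 |g(x)|\leq|g(x)-g(0)|+|g(0)|\leq s\,|x|+|y|\leq s\,r+(1-s)r=r,
\]
whence $g(\overline{B_r(0)})\subset\overline{B_r(0)}$.

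Finally, since $\overline{B_r(0)}$ is a complete metric space and $g$ is a contraction of it into itself, the Banach fixed point theorem provides a unique fixed point $x\in\overline{B_r(0)}$, which is by construction the unique solution of $f(x)=y$ in $\overline{B_r(0)}$. I do not anticipate any genuine obstacle here, as this is a standard contraction argument; the only points demanding care are that the tilt hypothesis on $Df$ must be invoked with the basepoint $z=0$ in order to produce the contraction constant $s$, and that the self-map property of $g$ uses the quantitative bound $|y|\leq(1-s)r$ exactly.
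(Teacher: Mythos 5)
Your proof is correct and complete: the paper gives no proof of this theorem, citing Lang instead, and Lang's own argument is precisely this one — recast $f(x)=y$ as a fixed point problem for $g(x)=x-f(x)+y$, use the tilt hypothesis at the basepoint $0$ to get the contraction constant $s$, use $|y|\leq(1-s)r$ for the self-map property, and conclude with the Banach fixed point theorem. Nothing to add.
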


The following lemma provides a suitable comparison function in section \ref{sec:4}. 
It is a generalisation by Schygulla of the biharmonic comparison principle by Simon (see \cite[Lemma 2.2]{Simon}):
\begin{lemma}[See \cite{Schygulla}, p. 938 Lemma 8]
\label{A_3}
 Let $L\subset \R^3$ be a $2$-dimensional plane, $x_0\in L$ and $u\in C^\infty(U,L^\perp)$, where $U\subset L$ is an open neighbourhood of $L\cap \partial B_\rho(x_0)$.
 Moreover let $|Du|\leq c$ on $u$. Then there exists a function $w\in C^\infty(\overline{B_\rho(x_0)}, L^\perp)$ such that
 \begin{equation*}
  w=u,\ \frac{\partial w}{\partial \nu}=\frac{\partial u}{\partial \nu}\mbox{ on }\partial B_\rho(x_0),
  \end{equation*}
  \begin{equation*}
  \frac{1}{\rho}\|w\|_{L^\infty(B_\rho(x_0))}\leq c\left(\frac{1}{\rho}\|u\|_{L^\infty(\partial B_\rho(x_0))} + \|Du\|_{L^\infty(\partial B_\rho(x_0))}\right),
  \end{equation*}
  \begin{equation*}
  \|Dw\|_{L^\infty(B_\rho(x_0))}\leq c\|Du\|_{L^\infty(\partial B_\rho(x_0))},
  \end{equation*}
   \begin{equation*}
  \int_{B_\rho(x_0)}|D^2 w(x)|^2dx\leq c\rho \int_{\graph u_{\partial B_\rho(x_0)}}|A|^2d\mathcal{H}^1.
 \end{equation*}
Here $A$ denotes the second fundamental form of $\graph u$. Furthmore $\nu$ is the outer normal of $L\cap B_\rho(x_0)$ with respect to $L$.
\end{lemma}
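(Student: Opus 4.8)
The plan is to reduce to the unit disk by scaling and then take $w$ to be the clamped biharmonic extension of the Cauchy data of $u$; the two boundary conditions then hold by construction, while the three estimates follow from the explicit biharmonic Poisson kernels of the disk and from a sharp Hessian-energy estimate. After a translation we may assume $x_0=0$, and identifying $L$ with $\R^2$ we work on $B_\rho(0)\subset\R^2$ with $u$ scalar-valued, since $\dim L^\perp=1$. Setting $\tilde u(y):=\rho^{-1}u(\rho y)$ for $y$ near $\partial B_1$, the hypothesis $|Du|\le c$ is scale invariant, and any unit-scale solution $\tilde w$ produces $w(x):=\rho\,\tilde w(x/\rho)$. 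A direct computation shows that all three estimates are scale invariant in exactly the stated form, using $|D^2\tilde u|=\rho|D^2u|$, $d\mathcal H^1_{\partial B_\rho}\approx\rho\,d\mathcal H^1_{\partial B_1}$ and $|A|\approx|D^2u|$; the last holds with constants depending only on $c$ because $|A|^2=|D^2u|^2/(1+|Du|^2)$. Hence it suffices to treat $\rho=1$.

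At unit scale I would let $w$ be the unique minimiser of $v\mapsto\int_{B_1}|D^2v|^2$ among all $v$ with $(v,\partial_\nu v)=(g_0,g_1):=(u,\partial_\nu u)$ on $\partial B_1$; its Euler--Lagrange equation is the clamped biharmonic problem $\Delta^2w=0$, $w=g_0$, $\partial_\nu w=g_1$. Since $u\in C^\infty$ near $\partial B_1$ we have $g_0,g_1\in C^\infty(\partial B_1)$, so $w\in C^\infty(\overline{B_1})$ by boundary elliptic regularity for $\Delta^2$, and the Cauchy data are matched by construction. Writing $w$ through the explicit biharmonic Poisson kernels of the disk (Boggio's kernel), which are smooth and bounded, gives $\|w\|_{L^\infty}\le c(\|g_0\|_{L^\infty(\partial B_1)}+\|g_1\|_{L^\infty(\partial B_1)})\le c(\|u\|_{L^\infty(\partial B_1)}+\|Du\|_{L^\infty(\partial B_1)})$. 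For the gradient bound I would exploit that $Dw$ is unchanged if a constant is added to $u$: replacing $g_0$ by $g_0-\overline{g_0}$ with $\overline{g_0}$ its mean, and using $\|g_0-\overline{g_0}\|_{L^\infty}\le c\|\partial_\theta g_0\|_{L^\infty}\le c\|Du\|_{L^\infty(\partial B_1)}$ together with $|g_1|\le|Du|$, yields $\|Dw\|_{L^\infty}\le c\|Du\|_{L^\infty(\partial B_1)}$ with no dependence on $\|u\|_{L^\infty}$.

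The heart of the proof is the Hessian-energy estimate $\int_{B_1}|D^2w|^2\le c\int_{\partial B_1}|D^2u|^2\,d\mathcal H^1$. By minimality $\int_{B_1}|D^2w|^2$ is comparable to the homogeneous trace norm $\|g_0\|_{\dot H^{3/2}(\partial B_1)}^2+\|g_1\|_{\dot H^{1/2}(\partial B_1)}^2$, as one sees mode by mode from the biharmonic solutions $a_nr^{|n|}+b_nr^{|n|+2}$. On the other hand, expressing the Cartesian Hessian along $\partial B_1$ in polar coordinates gives $\int_{\partial B_1}|D^2u|^2\gtrsim\|\partial_\theta^2g_0\|_{L^2}^2+\|\partial_\theta g_1\|_{L^2}^2=\|g_0\|_{\dot H^2(\partial B_1)}^2+\|g_1\|_{\dot H^1(\partial B_1)}^2$ up to lower-order gradient terms, and since $|n|^3\le|n|^4$ and $|n|\le|n|^2$ for $|n|\ge1$ the right-hand norm dominates the trace norm on every non-trivial mode. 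The finitely many low modes are handled by first subtracting an affine function from $u$, which changes neither $\int_{B_1}|D^2w|^2$ nor, up to the $c$-dependent constant, $\int_{\partial B_1}|A|^2$, and by noting that the mean of $g_1$ is controlled by the radial second derivative $u_{rr}$ already present in $\int_{\partial B_1}|D^2u|^2$. I expect this estimate to be the main obstacle: it is precisely where the one-derivative gain of the biharmonic extension is indispensable, since the naive radial competitor $\zeta(r)(g_0+(r-1)g_1)$ would cost $\|g_1\|_{\dot H^2}$, i.e. two tangential derivatives of the normal data, which the boundary curvature integral does not control. Combining the unit-scale bounds with the scaling reduction and using $|A|\approx|D^2u|$ once more then yields all three assertions at scale $\rho$.
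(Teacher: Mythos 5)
You should first note that the paper itself contains no proof of Lemma \ref{A_3}: it is imported verbatim from Schygulla \cite[p.~938, Lemma 8]{Schygulla}, which in turn refines Simon's biharmonic comparison lemma \cite[Lemma 2.2]{Simon}, so there is no in-paper argument to compare against. Your proposal essentially reconstructs the standard proof behind that citation: rescale to $\rho=1$, take the clamped biharmonic extension of the Cauchy data $(g_0,g_1)=(u,\partial_\nu u)$, read off the Hessian energy mode by mode from the solutions $a_n r^{|n|}+b_n r^{|n|+2}$, handle the low modes by subtracting an affine function, and convert between $|A|^2$ and $|D^2u|^2$ using the gradient bound. The overall outline is sound and is the same family of argument as the cited proof.

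There is, however, one step that is wrong as written: the claim that the mean of $g_1$ is controlled by the radial second derivative $u_{rr}$. Take $u(x)=c|x|$, which is smooth near $\partial B_1$; then $u_{rr}\equiv 0$ there, yet $g_1\equiv c$. The correct mechanism is the curvature coupling in the tangential derivatives along the circle: on $\partial B_1$ one has $u_{TT}=\partial_\theta^2 g_0+g_1$ and $u_{Tr}=\partial_\theta g_1-\partial_\theta g_0$, so the mean of $u_{TT}$ equals the mean of $g_1$ (the term $\partial_\theta^2 g_0$ integrates to zero), which gives $\bigl(\tfrac{1}{2\pi}\int g_1\bigr)^2\leq C\int_{\partial B_1}|D^2u|^2\,d\mathcal{H}^1$. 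The same coupling means your inequality $\int_{\partial B_1}|D^2u|^2\gtrsim\|\partial_\theta^2 g_0\|_{L^2}^2+\|\partial_\theta g_1\|_{L^2}^2$ ``up to lower-order gradient terms'' must be handled with care: the final estimate tolerates \emph{no} additive gradient terms (for $u$ affine both sides must vanish), so those terms have to be absorbed. The clean way is to invert the coupled system mode by mode: writing $P_n=\hat{g}_1(n)-n^2\hat{g}_0(n)$ and $Q_n=n\bigl(\hat{g}_1(n)-\hat{g}_0(n)\bigr)$, which are exactly the Fourier coefficients of $u_{TT}$ and $u_{Tr}$, one finds for $|n|\geq 2$ that $(n^2-1)\hat{g}_0(n)=Q_n/n-P_n$, hence
\begin{equation*}
 n^4|\hat{g}_0(n)|^2+n^2|\hat{g}_1(n)|^2\leq C\left(|P_n|^2+|Q_n|^2\right),
\end{equation*}
which dominates the trace norm $|n|^3|\hat{g}_0(n)|^2+|n||\hat{g}_1(n)|^2$, while the modes $n=0,\pm1$ are removed by the affine subtraction together with the mean-of-$g_1$ bound above. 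With this repair your argument closes and yields the lemma as stated.
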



\phantomsection
\addcontentsline{toc}{section}{Literatur}
\bibliography{bibliography}
\bibliographystyle{plain}


\end{document}